\theoremstyle{plain}
\newtheorem{theorem}{\bf Theorem}
\newtheorem{conjecture}[theorem]{\bf Conjecture}
\newtheorem{corollary}[theorem]{\bf Corollary}
\newtheorem{lemma}[theorem]{\bf Lemma}
\theoremstyle{definition}
\newenvironment{remark}[1][Remark.]{\begin{trivlist}
		\item[\hskip \labelsep {\bfseries #1}]}{\end{trivlist}}
\numberwithin{theorem}{section}
\numberwithin{equation}{section}
\newcommand{\Rea}{{\mathbb R}}
\newcommand{\zhomology}[2]{\tilde{H}_{#1}\left(#2\right)}
\DeclareMathOperator{\lk}{lk}
\DeclareMathOperator{\cost}{cost}
\DeclareMathOperator{\conv}{\text{conv}}
\begin{document}

    \title{Extensions of the Colorful Helly Theorem for $d$-collapsible and $d$-Leray complexes}

\author[1]{Minki Kim\thanks{\href{mailto:minkikim@gist.ac.kr}{minkikim@gist.ac.kr}. Minki Kim was supported by the Basic Science Research Program through the National Research Foundation of Korea (NRF) funded by the Ministry of Education (NRF-2022R1F1A1063424)}}
 \affil[1]{Division of Liberal Arts and Sciences, Gwangju Institute of Science and Technology (GIST), Gwangju, Republic of Korea}
 
    \author[2]{Alan Lew\thanks{\href{mailto:alanlew@andrew.cmu.edu}{alanlew@andrew.cmu.edu}.}}
    \affil[2]{Dept. Math. Sciences, Carnegie Mellon University, Pittsburgh, PA 15213, USA}

	\date{}
	\maketitle

\begin{abstract}
    We present extensions of the Colorful Helly Theorem for $d$-collapsible and $d$-Leray complexes, providing a common generalization to the matroidal versions of the theorem due to Kalai and Meshulam, the ``very colorful" Helly theorem introduced by Arocha, B\'ar\'any, Bracho, Fabila and Montejano, and the ``semi-intersecting" colorful Helly theorem proved by Montejano and Karasev.

    As an application, we obtain the following extension of Tverberg's Theorem: Let $A$ be a finite set of points in $\Rea^d$ with $|A|>(r-1)(d+1)$. Then, there exist a partition $A_1,\ldots,A_r$ of $A$ and a subset $B\subset A$ of size $(r-1)(d+1)$, such that $\cap_{i=1}^r \conv( (B\cup\{p\})\cap A_i)\neq\emptyset$ for all $p\in A\setminus B$. That is, we obtain a partition of $A$ into $r$ parts that remains a Tverberg partition even after removing all but one arbitrary point from $A\setminus B$.
\end{abstract}

\section{Introduction}

Let $\mathcal{F}$ be a family of (not necessarily distinct) sets, colored with $r$ colors. Formally, we have a partition $\mathcal{F}=\mathcal{F}_1\cup \cdots\cup \mathcal{F}_r$. We call the subfamilies $\mathcal{F}_i$ the \emph{color classes} of $\mathcal{F}$.
We say that a subfamily $\mathcal{F}'\subset\mathcal{F}$ is \emph{colorful} if it contains at least one set from each color class. We say that $\mathcal{F}'$ is \emph{intersecting} if $\cap_{F\in \mathcal{F}'} F\neq \emptyset$. 
The Colorful Helly Theorem, observed by Lov\'asz, asserts the following:

\begin{theorem}[Lov\'asz's Colorful Helly Theorem (see \cite{barany1982generalization})]
\label{thm:cht}
Let $\mathcal{C}$ be a finite family of convex sets in $\Rea^d$ colored with $d+1$ colors. If all colorful subfamilies of $\mathcal{F}$ of size $d+1$ are intersecting, then one of the color classes is intersecting.
\end{theorem}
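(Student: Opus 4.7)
The plan is to reduce Theorem~\ref{thm:cht} to a combinatorial statement about $d$-Leray complexes, in line with this paper's framework.

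First, pass to the nerve $N(\mathcal{C})$: the simplicial complex on vertex set $\mathcal{C}$ whose faces are the subfamilies with nonempty common intersection. By Wegner's classical theorem, the nerve of any finite family of convex sets in $\mathbb{R}^d$ is $d$-collapsible, hence $d$-Leray. Under this correspondence the hypothesis becomes ``every colorful $(d+1)$-subset of vertices is a face of $N(\mathcal{C})$,'' and the conclusion becomes ``some color class $\mathcal{F}_i$ spans a face of $N(\mathcal{C})$.''

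It therefore suffices to prove the following combinatorial statement: if $X$ is a $d$-Leray complex on $V = V_1 \sqcup \cdots \sqcup V_{d+1}$ and every colorful $(d+1)$-subset is a face of $X$, then some $V_i$ is a face of $X$. I would argue by contradiction. Assume no $V_i$ is a face; for each $i$, pick a minimal non-face $\sigma_i \subseteq V_i$, so $|\sigma_i| \geq 2$ and every proper subset of $\sigma_i$ lies in $X$. Consider the induced subcomplex $Y := X|_{\sigma_1 \cup \cdots \cup \sigma_{d+1}}$. The aim is to exhibit a nontrivial reduced homology class of $Y$ in some dimension $\geq d$, contradicting the $d$-Leray property.

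The main obstacle is producing this class. The natural candidate is the fundamental class of the simplicial sphere $\partial\sigma_1 \ast \cdots \ast \partial\sigma_{d+1}$ of dimension $\sum_i(|\sigma_i|-1) - 1 \geq d$, where $\ast$ denotes the join. Two things must be checked: that this sphere is a subcomplex of $Y$, and that its fundamental class does not bound in $Y$. The colorful-face hypothesis directly provides only the faces of $Y$ with at most one vertex per color, so when some $|\sigma_i|>2$ additional work is required. I expect to handle this either by an exchange argument reducing to the case $|\sigma_i|=2$ for all $i$ (in which $Y$ is precisely the $d$-sphere $\partial\sigma_1 \ast \cdots \ast \partial\sigma_{d+1}$, immediately giving the contradiction), or by an inductive argument on the $|\sigma_i|$ that uses links in $X$ together with the $d$-Leray hypothesis applied to smaller induced subcomplexes to peel off the higher-dimensional faces of $Y$ without destroying the top homology class.
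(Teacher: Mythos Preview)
Your reduction via the nerve and Wegner's theorem is correct and is exactly how the paper begins (see the proof of Theorem~\ref{thm:main_geometric}). The paper, however, does not attempt to build an explicit homology class. Instead it encodes the coloring as the partition matroid $M$ on $\mathcal{C}$, whose rank function counts the number of colors present; the hypothesis of Theorem~\ref{thm:cht} becomes $M\subset X$, and one applies Theorem~\ref{thm:kalai_meshulam_collapsible} (or Theorem~\ref{thm:very_colorful_helly_d_coll} with $m=1$, $k=r=d+1$) to obtain $\tau\in X$ with $\rho(V\setminus\tau)\le d$, which forces $\tau$ to contain an entire color class.

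Your direct homological route has a genuine gap in exactly the place you flag. In the special case $|\sigma_i|=2$ for all $i$ the argument is clean: the induced subcomplex $Y$ is then \emph{exactly} the cross-polytopal $d$-sphere, since each $\sigma_i$ is a non-edge and every rainbow set is a face, and $\tilde H_d(Y)\neq 0$ contradicts $d$-Lerayness. But for $d\ge 2$ you cannot reduce to this case. Minimal non-faces of a $d$-Leray complex may have any size from $2$ up to $d+1$, so a color class $V_i$ can be a clique (every pair an edge) while still failing to be a simplex of $X$; there is then no size-$2$ non-face inside $V_i$, and your ``exchange argument'' has nothing to exchange to. When some $|\sigma_i|>2$, the top faces of $\partial\sigma_1\ast\cdots\ast\partial\sigma_{d+1}$ contain several vertices of a single color, and the colorful hypothesis says nothing about such sets --- they need not lie in $X$, so the sphere is not a subcomplex of $Y$ and there is no cycle to start from. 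Your fallback of inducting via links is also not promising: the link of a vertex in a $d$-Leray complex is in general only $d$-Leray again, not $(d-1)$-Leray (take $X$ to be the cone over the boundary of a $d$-simplex and link at the cone point), so passing to a link buys no additional homological vanishing to drive an induction. The matroidal argument sidesteps all of this by working globally with the rank function rather than chasing a particular cycle.
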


Many combinatorial properties of families of convex sets, and in particular Helly-type results like Theorem \ref{thm:cht}, can be described in terms of the nerve of the family. Given a finite family $\mathcal{F}$ of sets, the \emph{nerve} of $\mathcal{F}$ is the simplicial complex on vertex set $\mathcal{F}$ whose simplices are the subfamilies $\mathcal{F}'\subset\mathcal{F}$ that are intersecting. We say that a simplicial complex $X$ is \emph{$d$-representable} if it is isomorphic to the nerve of some family of convex sets in $\Rea^d$.

Two related notions are those of $d$-collapsibility and $d$-Lerayness, introduced by Wegner in \cite{Weg75}. Let $X$ be a simplicial complex. A simplex $\sigma\in X$ is called a \emph{free face} of $X$ if $\sigma$ is contained in a unique maximal face $\tau$ (possibly equal to $\sigma$) of $X$.
Given a free face $\sigma$ of size at most $d$ in $X$, an \emph{elementary $d$-collapse} is the operation of removing from $X$ all the simplices containing $\sigma$ (and contained in $\tau$). We say that $X$ is \emph{$d$-collapsible} if
there is a sequence of elementary $d$-collapses reducing $X$ to the void complex $\emptyset$.
A simplicial complex $X$ is called \emph{$d$-Leray} if all induced subcomplexes of $X$, including $X$ itself, have trivial homology groups in dimensions $d$ and higher.

Wegner showed in \cite{Weg75} that every $d$-representable complex is $d$-collapsible, and that every $d$-collapsible complex is $d$-Leray.
Many interesting results about the intersection patterns of a family of convex sets can be obtained from the fact that the nerve of such a family is $d$-collapsible (or $d$-Leray). For example, in \cite{kalai2005topological}, Kalai and Meshulam proved the following generalizations of Theorem \ref{thm:cht}.

\begin{theorem}[{\cite[Theorem 2.1]{kalai2005topological}}]
\label{thm:kalai_meshulam_collapsible}
    Let $X$ be a $d$-collapsible simplicial complex on vertex set $V$, and let $M$ be a matroid on $V$ of rank $r$, with rank function $\rho$, such that $M\subset X$. Then, there is some $\tau\in X$ such that $\rho(\tau)=r$ and $\rho(V\setminus\tau)\leq d$.
\end{theorem}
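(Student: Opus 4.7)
The plan is to exploit the $d$-collapsibility of $X$ by tracking how a collapse sequence interacts with the bases of $M$. Fix any sequence of elementary $d$-collapses reducing $X$ to $\emptyset$, and consider the very first step in this sequence at which some basis of $M$ is removed; call the free face used at this step $\sigma$ (so $|\sigma|\leq d$) and the unique maximal face containing $\sigma$, relative to the current subcomplex $X'\subseteq X$, by $\tau$. Since a basis $B$ of $M$ is removed at this step, we have $\sigma\subseteq B\subseteq \tau$. Consequently $\sigma$ is independent in $M$, and since $B\subseteq \tau$ is a basis of $M$, we get $\rho(\tau)=r$.

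The crux is to show $\rho(V\setminus \tau)\leq d$, and for this I would argue that every element of $V\setminus \tau$ lies in the matroid closure of $\sigma$. Fix $e\in V\setminus \tau$ and consider the fundamental circuit $C=C(e,B)\subseteq B\cup\{e\}$. For any $f\in C\setminus\{e\}$, the exchange $B'=(B\setminus\{f\})\cup\{e\}$ is again a basis of $M$, hence $B'\in M\subseteq X$. Because our chosen collapse was the \emph{first} to remove a basis, $B'$ is still present in $X'$. Now the freeness of $\sigma$ in $X'$ means that every face of $X'$ containing $\sigma$ is contained in $\tau$; but $e\in B'\setminus \tau$, so $B'\not\subseteq \tau$, which forces $\sigma\not\subseteq B'$. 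Since $\sigma\subseteq B$ and $e\notin \sigma$, the only element whose removal could break the inclusion $\sigma\subseteq B$ is $f$ itself; thus $f\in \sigma$.

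Since this holds for every $f\in C\setminus\{e\}$, we conclude $C\subseteq \sigma\cup\{e\}$, so $e$ lies in the matroid closure of $\sigma$. Therefore $V\setminus \tau$ is contained in the closure of $\sigma$, yielding $\rho(V\setminus \tau)\leq \rho(\sigma)\leq |\sigma|\leq d$, as required. The step I expect to be most delicate is identifying the correct collapse to single out: the argument works precisely because we choose the \emph{first} collapse that destroys any basis, so that all basis-exchange partners of $B$ are still available in $X'$, allowing the freeness of $\sigma$ (uniqueness of $\tau$ as its coface) to interact cleanly with fundamental circuits in $M$.
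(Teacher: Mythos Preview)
Your argument is correct and follows the same overall strategy as the paper's (the paper does not prove this statement directly but proves the generalization Theorem~\ref{thm:very_colorful_helly_d_coll}, explicitly extending Kalai--Meshulam's proof of the case $m=1$, $k=d+1$): single out a critical elementary $d$-collapse, take the free face $\sigma$ and its unique maximal coface $\tau$ there, and show $V\setminus\tau\subseteq\spanop_M(\sigma)$, whence $\rho(V\setminus\tau)\leq|\sigma|\leq d$. The main technical difference is that the paper first invokes Tancer's lemma (Lemma~\ref{lemma:tancer}) to normalize all free faces to size exactly $d$, and tracks the family of independent sets of size $\geq d+1$ rather than bases; at the critical step it then argues, for $v\notin\spanop_M(\sigma)$, that $\sigma\cup\{v\}$ is itself an independent $(d+1)$-set still present in $X_n$, forcing $v\in\tau$. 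Your route via fundamental circuits and basis exchange is a clean matroid-specific alternative that avoids the Tancer normalization (since you never need $|\sigma|=d$, only $|\sigma|\leq d$); on the other hand, the paper's formulation in terms of the family $M^{d+m-k}$ is precisely what allows the extension to general $m$ and $k$.
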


\begin{theorem}[{\cite[Theorem 1.6]{kalai2005topological}}]
\label{thm:kalai_meshulam_leray}
     Let $X$ be a $d$-Leray simplicial complex on vertex set $V$, and let $M$ be a matroid on $V$ of rank $r$, with rank function $\rho$, such that $M\subset X$. Then, there is some $\tau\in X$ such that $\rho(V\setminus\tau)\leq d$.
\end{theorem}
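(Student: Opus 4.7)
The plan is to proceed by strong induction on $|V|$, attempting to reduce to smaller instances via vertex removal in $X$ and matroid deletion/contraction in $M$. The base case $|V|=0$ is trivial, and if $r\leq d$ then $\tau=\emptyset$ already works. For the inductive step I fix $v\in V$ and consider two options: apply induction either to $X[V\setminus\{v\}]$ (always $d$-Leray) containing $M\setminus v$, or to $\lk_X(v)$ containing the contraction $M/v$. If $v$ is a loop of $M$, the first option closes: $M\setminus v$ has the same rank $r$, and since loops do not contribute to rank, the conclusion for $M\setminus v$ transfers back to $M$. If $v$ is not a loop, I would use the rank identity $\rho_M(A\cup\{v\})=\rho_{M/v}(A)+1$ for $A\subseteq V\setminus\{v\}$, which lifts a bound of the form $\rho_{M/v}((V\setminus\{v\})\setminus\tau')\leq d-1$ on $\lk_X(v)$ to $\rho_M(V\setminus(\tau'\cup\{v\}))\leq d$ on $X$, with $\tau'\cup\{v\}\in X$ since $\tau'\in\lk_X(v)$.

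The main obstacle is that this link branch requires $\lk_X(v)$ to be $(d-1)$-Leray, which is false in general. A concrete counterexample is the cone $Y=\{v\}\ast C_n$ over an $n$-cycle: every induced subcomplex containing the apex is a cone and hence contractible, and every induced subcomplex omitting the apex is a subgraph of $C_n$, so the only induced subcomplex of $Y$ with nontrivial higher homology is $C_n$ itself; thus $Y$ is $2$-Leray. But $\lk_Y(v)=C_n$ has $\tilde H_1(C_n)\cong\mathbb{Z}$, so it is not $1$-Leray. Hence the naive induction through the link is unavailable, and a genuinely global argument is required.

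To bypass the obstruction, I would switch to a proof by contradiction. Assume $\rho(V\setminus\tau)\geq d+1$ for every $\tau\in X$, so each face of $X$ admits a disjoint independent $(d+1)$-set, i.e.\ a $d$-face of $M$, giving an abundance of $d$-simplices of $M$ sitting inside $X$. Since the matroid complex $M$ is pure shellable and hence Cohen--Macaulay of dimension $r-1$, its $d$-skeleton carries a rich supply of $d$-cycles assembled from matroid exchanges. The plan is to align the Mayer--Vietoris decomposition $M=\st_M(v)\cup\cost_M(v)$, with $\cost_M(v)=M\setminus v$ and $\lk_M(v)=M/v$, to the analogous decomposition of $X$, and exploit the $d$-Lerayness of \emph{all} induced subcomplexes $X[V']$ (not just $X$ itself) to push a nontrivial $d$-cycle of $M$ into some $X[V']$, contradicting $d$-Lerayness.

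The hardest part, and the main technical hurdle I anticipate, is the explicit construction of such a cycle together with the verification that it does not bound in $X[V']$. The difficulty is that $d$-Lerayness only controls top-degree homology while the matroid's distinguished class sits in degree $r-1$, so one has to transfer a high-degree matroidal obstruction down to degree $d$ using only the rank-$(d+1)$ hypothesis on complements. I expect this step to require either a spectral-sequence comparison of the chain complexes of $M$ and $X$ along the cover by the induced subcomplexes $\{X[V\setminus\sigma]:\sigma\in M,\;|\sigma|=d+1\}$, or a careful rank-induction on $M$ in which the deletion--contraction decomposition of the matroid is matched against a Mayer--Vietoris sequence for the star--costar decomposition of $X$ at a well-chosen vertex, with the $d$-Lerayness carrying forward through the bookkeeping.
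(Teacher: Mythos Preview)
Your proposal does not contain a proof. The first approach (induction via deletion/contraction matched with costar/link) is correctly abandoned---your cone-over-a-cycle example is valid---but the second approach is left as a plan whose decisive step you explicitly flag as unresolved. You never construct the promised $d$-cycle, never identify the induced subcomplex $X[V']$ in which it should fail to bound, and the speculation about a spectral-sequence comparison or a deletion--contraction/Mayer--Vietoris matching is not carried out. So the contradiction is never reached.

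The paper does not prove this theorem from scratch (it is cited from Kalai--Meshulam), but its proof of Theorem~\ref{thm:very_colorful_helly_d_leray_m==1} with $k=d+1$ specializes to exactly this statement, and isolates the missing ingredient as Lemma~\ref{lemma:kalai_meshulam}: if for every $\sigma\in X$ one has $\rho(V\setminus\sigma)\geq \ell_\sigma+1$, where $\ell_\sigma$ is the least integer with $\tilde H_i(\lk(X,\sigma))=0$ for all $i\geq \ell_\sigma$, then $M\not\subset X$. Granting this, the argument is one line: assuming $\rho(V\setminus\sigma)\geq d+1$ for all $\sigma\in X$, the link characterization of $d$-Lerayness (Lemma~\ref{lemma:km_dleray}) gives $\ell_\sigma\leq d$, so the lemma applies and $M\not\subset X$, contradicting $M\subset X$. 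Your instinct that a global homological comparison is needed is correct---Lemma~\ref{lemma:kalai_meshulam} is itself proved in \cite{kalai2005topological} by a spectral-sequence type argument---but the crucial reformulation you are missing is to compare $\rho(V\setminus\sigma)$ against the homological vanishing threshold of $\lk(X,\sigma)$ simultaneously for \emph{every} face $\sigma$, rather than to search for one explicit cycle in one induced subcomplex.
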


Theorem \ref{thm:cht} can be deduced from either Theorem \ref{thm:kalai_meshulam_collapsible} or \ref{thm:kalai_meshulam_leray} by taking $X$ to be the nerve of the family of convex sets, and $M$ to be a ``partition matroid" corresponding to the coloring of the family (see Section \ref{sec:apps} for more details).

Generalizations of Theorem \ref{thm:cht} in a different direction were presented by Arocha et al. in \cite{arocha2009very} and by Montejano and Karasev in \cite{montejano2011topological}.

\begin{theorem}[Very Colorful Helly Theorem {\cite[Theorem 10]{arocha2009very}}]
\label{thm:arocha_et_al}
Let $1\leq k\leq d+1$, and let $\mathcal{C}$ be a finite family of convex sets in $\Rea^d$ colored with $d+1$ colors. If every subfamily $\mathcal{C}'\subset\mathcal{C}$ of size $d+1$ having at least $k$ different colors is intersecting, then there are $d+2-k$ color classes whose union is intersecting.
\end{theorem}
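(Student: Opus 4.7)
The plan is to reformulate the statement in terms of the nerve and apply the matroidal version of the Helly theorem due to Kalai and Meshulam (Theorem~\ref{thm:kalai_meshulam_collapsible}). Let $X$ be the nerve of $\mathcal{C}$, which is $d$-collapsible by Wegner's theorem, and write $V = V_1 \sqcup \cdots \sqcup V_{d+1}$ for the partition of its vertex set by color. For $S \subseteq V$, let $c(S)$ denote the number of color classes met by $S$. The hypothesis becomes: every $(d+1)$-subset $S \subseteq V$ with $c(S) \geq k$ is a face of $X$, and I must exhibit $d+2-k$ color classes whose union is a face of $X$.

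The main construction is the family
\[
\mathcal{I} = \bigl\{\, S \subseteq V : |S| \leq d+1 \text{ and } |S| - c(S) \leq d+1-k \,\bigr\}.
\]
Heredity follows because $|S| - c(S)$ is monotone under inclusion, and a short case analysis on whether the larger set contains a color absent from the smaller one establishes the augmentation axiom. Thus $\mathcal{I}$ is the family of independent sets of a matroid $M$ of rank $d+1$, whose bases are exactly the $(d+1)$-subsets of $V$ with at least $k$ colors. Every basis lies in $X$ by hypothesis, and since every independent set is contained in some basis, $M \subseteq X$.

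Applying Theorem~\ref{thm:kalai_meshulam_collapsible} yields $\tau \in X$ with $\rho_M(\tau) = d+1$ and $\rho_M(V \setminus \tau) \leq d$. A direct rank computation, splitting into the cases $c(V \setminus \tau) \geq k$ and $c(V \setminus \tau) < k$, shows that $\rho_M(V \setminus \tau) \leq d$ forces one of two scenarios: either (A) $c(V \setminus \tau) \leq k-1$, in which case at least $d+2-k$ color classes are entirely contained in $\tau$ and their union (as a subset of the face $\tau$) is a face of $X$; or (B) $|V \setminus \tau| \leq d$, in which case the rank inequality is vacuously satisfied.

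The main obstacle is case (B), where the Kalai--Meshulam conclusion does not immediately produce the desired union of color classes, since $V \setminus \tau$ may still meet many color classes. My plan is to handle it by induction on $|V|$: the base case $|V| = d+1$ is trivial because $V$ itself is the unique $(d+1)$-subset, has $d+1 \geq k$ colors, and hence lies in $X$, so any $d+2-k$ color classes give the desired union; for $|V| > d+1$ some color class contains at least two elements, so one can delete a single vertex and invoke the inductive hypothesis on the smaller family. The delicate point, which I expect to be the technical crux, is to show that the $d+2-k$ color classes returned by induction can be arranged to avoid the deleted element, so that the conclusion lifts cleanly to the original family. A likely resolution is to strengthen the inductive statement to give some control over which color classes appear in the conclusion, or alternatively to exploit the explicit $\tau$ from Kalai--Meshulam together with a matroid exchange argument.
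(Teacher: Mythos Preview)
Your matroid $\mathcal{I}$ is precisely the truncation to rank $d+1$ of the tolerance matroid $M^{d+1-k}$ introduced in Section~\ref{sec:tolerance_matroids}, and the observation that its bases lie in $X$ is the right starting point; the paper uses the same object. The genuine gap is case~(B). Applying Theorem~\ref{thm:kalai_meshulam_collapsible} as a black box cannot rule this case out: since \emph{every} matroid satisfies $\rho(S)\le|S|$, the conclusion $\rho_{M}(V\setminus\tau)\le d$ is automatic whenever $|V\setminus\tau|\le d$, irrespective of colors. Your induction does not close the gap. After deleting $v\in V_i$ with $|V_i|\ge 2$, the $d+2-k$ color classes returned for the smaller family may include the truncated class $V_i\setminus\{v\}$, and nothing in the argument forces the additional set $C_v$ to contain the common point; hence $V_i\cup V_{j_2}\cup\cdots\cup V_{j_{d+2-k}}$ need not be a face of $X$. ``Strengthening the inductive statement to control which classes appear'' amounts to proving the sharper conclusion $c(V\setminus\tau)\le k-1$ directly, which is exactly what Theorem~\ref{thm:very_colorful_helly_d_coll} asserts.

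The paper sidesteps case~(B) by abandoning the black-box use of Theorem~\ref{thm:kalai_meshulam_collapsible}. In the $d$-collapsible proof (Theorem~\ref{thm:very_colorful_helly_d_coll} with $m=1$) one tracks the collapse sequence and stops at the first elementary $d$-collapse whose free face $\sigma$ removes a simplex of $\{\eta\in X\cap M^{d+1-k}:|\eta|\ge d+1\}$; a direct analysis of $\sigma$ and the unique maximal face $\tau\supset\sigma$, using only that $M^{d+1-k}$ is a matroid, forces $\rho(V\setminus\tau)\le k-1$, so case~(B) never arises. In the $d$-Leray version (Theorem~\ref{thm:very_colorful_helly_d_leray_m==1}) the paper argues by contradiction and applies the refined Lemma~\ref{lemma:kalai_meshulam}: assuming $c(V\setminus\sigma)\ge k$ for all $\sigma\in X$ bounds $\dim\lk(X,\sigma)\le|V\setminus\sigma|-k-1$, and this sharper estimate on $\ell_\sigma$ (rather than the generic $\ell_\sigma\le d$) is exactly what disposes of the small-complement situation that you isolate as case~(B).
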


\begin{theorem}[``Semi-intersecting" Colorful Helly Theorem {\cite[Lemma 2]{montejano2011topological}}]
\label{thm:montejano_karasev}
Let $\mathcal{C}$ be finite family of convex sets in $\Rea^d$ colored with $d+2$ colors. If every colorful subfamily of size $d+2$ has at most one non-intersecting subfamily of size $d+1$, then one of the color classes is intersecting.
\end{theorem}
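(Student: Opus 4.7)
My plan is to reduce Theorem \ref{thm:montejano_karasev} to a Kalai--Meshulam-style matroid argument, after enlarging the nerve of $\mathcal{C}$ to absorb the ``at most one bad facet'' hypothesis. Let $X$ be the nerve of $\mathcal{C}$, which is $d$-collapsible by Wegner's theorem, let $V=\mathcal{C}$, and let $M$ be the partition matroid on $V$ whose parts are the color classes $\mathcal{C}_1,\ldots,\mathcal{C}_{d+2}$; its rank is $d+2$. The goal is to exhibit a color class $\mathcal{C}_i\in X$.

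First I would form $X^{*}$, the smallest simplicial complex containing $X$ together with every colorful $(d+2)$-subset of $V$ as a simplex. By hypothesis, for each colorful $(d+2)$-subset $B$ at most one $(d+1)$-subset $\sigma_B\subset B$ is missing from $X$, so $X^{*}$ is obtained from $X$ by adjoining each colorful $(d+2)$-subset $B$ together with at most one additional colorful $(d+1)$-face $\sigma_B$ (and the requisite lower-dimensional colorful faces). By construction every independent set of $M$ is a simplex of $X^{*}$; that is, $M\subseteq X^{*}$.

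The main technical obstacle is to show that $X^{*}$ is $(d+1)$-Leray. The intuition is that each added $(d+2)$-subset $B$ arrives with its full boundary sphere already in $X^{*}$, so no new $(d+1)$-dimensional homology is introduced. A direct $(d+1)$-collapsibility argument is complicated because a single missing $(d+1)$-subset $\sigma$ can play the role of $\sigma_B$ for several colorful $(d+2)$-subsets $B$ simultaneously, so the would-be free face has multiple cofaces and one has to choose the collapsing order carefully. I would therefore attempt to prove $(d+1)$-Lerayness directly, comparing $X|_U$ and $X^{*}|_U$ for each $U\subseteq V$ by a Mayer--Vietoris or relative-homology argument that exploits the $d$-Lerayness of $X|_U$ to control the contribution of the added cells in degrees $\geq d+1$.

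Once $X^{*}$ is known to be $(d+1)$-Leray, Theorem \ref{thm:kalai_meshulam_leray} applied to $X^{*}$ and $M$ yields $\tau\in X^{*}$ with $\rho(V\setminus\tau)\leq d+1$. Since $M$ is a partition matroid of rank $d+2$, the complement $V\setminus\tau$ misses at least one full color class, so $\mathcal{C}_i\subseteq\tau$ and hence $\mathcal{C}_i\in X^{*}$ for some $i$. To descend from $X^{*}$ back to $X$: if $|\mathcal{C}_i|\leq 1$ the conclusion is immediate, while if $|\mathcal{C}_i|\geq 2$ then $\mathcal{C}_i$ contains two vertices of the same color and therefore cannot be a subset of any colorful simplex, forcing $\mathcal{C}_i\in X$. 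Either way, the color class $\mathcal{C}_i$ is intersecting, as desired.
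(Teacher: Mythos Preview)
Your strategy differs from the paper's: there Theorem~\ref{thm:montejano_karasev} is obtained as the case $m=2$, $r=k=d+2$ of Theorem~\ref{thm:main_geometric} (hence of Theorem~\ref{thm:very_colorful_helly_d_coll}), whose proof tracks a $d$-collapsing sequence of the nerve rather than enlarging it. Your idea---pass to $X^{*}=X\cup M$ so that $M\subset X^{*}$ and then invoke Theorem~\ref{thm:kalai_meshulam_leray}---is natural, and your descent from $X^{*}$ back to $X$ via ``a monochromatic simplex of size $\geq 2$ cannot lie in $M$'' is correct. But the step you flag as the main obstacle, that $X^{*}$ is $(d+1)$-Leray, is not merely hard: it is \emph{false}.

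Take $d=1$ and intervals $a_1=[\tfrac12,\tfrac52]$, $a_2=[0,3]$ of color~$1$, $b=[0,1]$ of color~$2$, $c=[2,3]$ of color~$3$. The only non-intersecting pair is $\{b,c\}$, so each colorful triple $\{a_j,b,c\}$ has exactly one bad pair and the hypothesis of Theorem~\ref{thm:montejano_karasev} holds. The nerve $X$ has maximal faces $\{a_1,a_2,b\}$ and $\{a_1,a_2,c\}$; adjoining the two colorful triples and the edge $\{b,c\}$ makes $X^{*}$ equal to the full boundary of the tetrahedron on $\{a_1,a_2,b,c\}$, so $\tilde H_2(X^{*})=\mathbb{Z}$ and $X^{*}$ is not $2$-Leray. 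The obstruction is structural. In the long exact sequence of the pair $(X^{*},X)$ one has $\tilde H_{d+1}(X^{*})\cong H_{d+1}(X^{*},X)$ because $X$ is $d$-Leray, and the relative boundary sends each added basis $B\notin X$ to $\pm\sigma_B$. Whenever a single missing colorful $(d+1)$-set $\sigma$ is the bad facet of two distinct bases $B_1,B_2$---precisely the situation you yourself warn about---a suitable combination $[B_1]\pm[B_2]$ is a nonzero relative cycle, and $M$ contributes no $(d+2)$-cells to kill it. So no Mayer--Vietoris or relative-homology bookkeeping can close this gap; the paper's argument avoids enlarging $X$ altogether.
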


Our main results consist of the following common generalizations of Theorem \ref{thm:arocha_et_al}, Theorem \ref{thm:montejano_karasev} and Theorem \ref{thm:kalai_meshulam_collapsible} (Theorem \ref{thm:kalai_meshulam_leray} respectively).

\begin{theorem}\label{thm:very_colorful_helly_d_coll}

Let $d\geq 1$, $r\geq d+1$, $1\leq m\leq r$ and $m\leq k\leq \min\{m+d,r\}$ be integers. Let $V$ be a finite set with $|V|\geq \max\{m+d,r\}$.
Let $X$ be a $d$-collapsible simplicial complex on vertex set $V$, and let $M$ be a matroid of rank $r$ on vertex set $V$ with rank function $\rho$.

Assume that for every $U=\{u_1,\ldots,u_d,v_1,\ldots,v_m\}\subset V$ with $\rho(U)\geq k$, there is some $i\in[m]$ such that $\{u_1,\ldots,u_d,v_i\}\in X$. 
Then, there is some $\tau\in X$ such that $\rho(\tau)\geq r+1-m$ and $\rho(V\setminus \tau)\leq k-1$.
\end{theorem}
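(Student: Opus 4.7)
The plan is to proceed by strong induction on the number of faces of $X$, exploiting the $d$-collapsibility of $X$.

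First, I would extract a key structural consequence of the hypothesis: every independent $d$-subset $L \subseteq V$ (i.e., with $\rho(L) = |L| = d$) must be a face of $X$. Given such $L$, using the inequalities $k \leq m + d$, $r \geq k$, and $|V \setminus L| \geq m$, one can find $\{v_1, \ldots, v_m\} \subseteq V \setminus L$ with $\rho(L \cup \{v_1, \ldots, v_m\}) \geq k$ (for example, by taking an independent set of size $\min(m, r - d)$ in the contracted matroid $M/L$ restricted to $V \setminus L$ and padding). The hypothesis then yields $L \cup \{v_i\} \in X$ for some $i$, forcing $L \in X$. In particular, $X$ has faces of size at least $d$, so the collapse sequence will meet free faces whose unique maximal face has size $\geq d$.

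The heart of the argument is the following claim: any free face $\sigma \in X$ of size $\leq d$ whose unique maximal face $\tau$ satisfies $|\tau| \geq d$ and $\rho(\tau) \geq r + 1 - m$ already gives the conclusion, because $\rho(V \setminus \tau) \leq k - 1$ automatically. Indeed, if instead $\rho(V \setminus \tau) \geq k$, I would extend $\sigma$ within $\tau$ to a $d$-subset $\{u_1, \ldots, u_d\}$ and pick $\{v_1, \ldots, v_m\} \subseteq V \setminus \tau$ such that $\rho(U) \geq k$; this is possible because $\rho(\tau) \geq r + 1 - m \geq k - m$ combined with $\rho(V \setminus \tau) \geq k$, so by a submodularity argument one first selects $\{v_1, \ldots, v_m\}$ in $V \setminus \tau$ making $\rho(\tau \cup \{v_1, \ldots, v_m\}) \geq k$ and then chooses $\{u_1, \ldots, u_d\}$ inside $\tau$ extending $\sigma$ to realize the relevant rank. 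The hypothesis then forces some $\{u_1, \ldots, u_d, v_i\} \in X$; but this face contains $\sigma$ and so must lie in $\tau$, contradicting $v_i \in V \setminus \tau$.

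If such a free face $\sigma$ exists, we are done. Otherwise, one collapses an arbitrary free face $[\sigma, \tau]$ and applies the induction hypothesis to $X' = X \setminus [\sigma, \tau]$. The main obstacle I anticipate is managing the case where the hypothesis of the theorem may fail for $X'$: this occurs precisely when some $U$ with $\rho(U) \geq k$ has every witness face $\{u_1, \ldots, u_d, v_i\} \in X$ lying entirely in $[\sigma, \tau]$, forcing $\{u_1, \ldots, u_d\} \subseteq \tau$ and each witnessing $v_i \in \tau$. I would try to leverage this structural constraint on $\tau$, together with $\rho(U) \geq k$ and submodularity---possibly by choosing the free face $\sigma$ carefully (e.g., with $|\sigma| = d$ and $\sigma$ independent in $M$, as the first step permits)---to directly deduce $\rho(\tau) \geq r + 1 - m$, yielding the desired $\tau$ without further recursion. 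This coupling between hypothesis-failure for $X'$ and high rank of the collapsed maximal face $\tau$ is the crux of the proof.
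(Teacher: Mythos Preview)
Your overall architecture---follow a $d$-collapse sequence and extract the desired $\tau$ as the maximal face over a well-chosen free face---matches the paper. However, the ``heart of the argument'' as you state it has a real gap, and the unresolved ``crux'' at the end is exactly where the missing idea lives.

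The problem with your central claim (free $\sigma$, maximal face $\tau$ with $\rho(\tau)\ge r+1-m$ $\Rightarrow$ $\rho(V\setminus\tau)\le k-1$) is the contradiction step. You want to extend $\sigma$ inside $\tau$ to $\{u_1,\dots,u_d\}$ and pick $v_1,\dots,v_m\in V\setminus\tau$ with $\rho(\{u_1,\dots,u_d,v_1,\dots,v_m\})\ge k$. But when $|\sigma|=d$ (and by Tancer's lemma you may assume this throughout the collapse), you have no freedom in the $u_j$'s: they equal $\sigma$. If $\rho(\sigma)<k-m$, then for \emph{every} choice of $v_1,\dots,v_m$ you get $\rho(\sigma\cup\{v_1,\dots,v_m\})\le \rho(\sigma)+m<k$, and the hypothesis gives you nothing. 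Knowing $\rho(\tau)$ is large does not help, because the contradiction must run through $\sigma$ itself. The ``submodularity argument'' you allude to cannot repair this.

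The paper's key device is precisely a mechanism for guaranteeing $\rho(\sigma)\ge k-m$ at the moment you read off $\tau$. Concretely, it introduces the tolerance matroid $M^{d+m-k}=\{\eta:\rho(\eta)\ge |\eta|-(d+m-k)\}$ and the family $\mathcal R=\{\eta\in X\cap M^{d+m-k}:|\eta|\ge d+1\}$, shows $\mathcal R\neq\emptyset$, and then runs the collapse until the \emph{last} step at which $\mathcal R$ is still entirely present. At that step the free face $\sigma$ of size $d$ must lie in $M^{d+m-k}$, i.e.\ $\rho(\sigma)\ge k-m$. With this in hand, one builds a maximal set $\eta\subset V\setminus\sigma$ with $\sigma\cup\eta\in M^{d+m-k}$ and $\sigma\cup\{u\}\notin X_n$ for all $u\in\eta$; the hypothesis forces $|\eta|\le m-1$, and one reads off both $\rho(V\setminus\tau)\le k-1$ and $\rho(\tau)\ge r+1-m$ from the structure of $\eta$. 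Note that the paper does \emph{not} assume $\rho(\tau)\ge r+1-m$ and then deduce the other bound; both are derived simultaneously from $\rho(\sigma)\ge k-m$ together with maximality of $\eta$.

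Your induction-on-faces framing can be made to work, but only once you identify the correct invariant to track through the collapse: not ``$\rho(\tau)$ is large'', but ``$\sigma$ sits in $M^{d+m-k}$''. Your final hint (choose $\sigma$ independent of size $d$) overshoots---independence is more than needed and need not persist along the collapse---whereas $\rho(\sigma)\ge k-m$ is exactly right and is guaranteed at the critical step by tracking $\mathcal R$.
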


Under the weaker assumption of $d$-Lerayness, we obtain a weaker conclusion:

\begin{theorem}\label{thm:very_colorful_helly_d_leray}
Let $d\geq 1$, $r\geq d+1$, $1\leq m\leq r$ and $m\leq k\leq \min\{m+d,r\}$ be integers. Let $V$ be a finite set with $|V|\geq \max\{m+d,r\}$.
    Let $X$ be a $d$-Leray simplicial complex on vertex set $V$, and let $M$ be a matroid of rank $r$ on vertex set $V$ with rank function $\rho$.

    Assume that for every $U=\{u_1,\ldots,u_{d+1},v_1,\ldots,v_{m-1}\}\subset V$ with $\rho(U)\geq k$, either $\{u_1,\ldots,u_{d+1}\}\in X$ or there exists $j\in[m-1]$ such that $\sigma\cup\{v_j\}\in X$ for every $\sigma\subsetneq \{u_1,\ldots,u_{d+1}\}$.
    Then, there is some $\tau \in X$ such that $\rho(V\setminus\tau) \leq k-1$.
\end{theorem}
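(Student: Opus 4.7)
The plan is to reduce to Theorem~\ref{thm:kalai_meshulam_leray} by constructing a $d$-Leray enlargement $X^+ \supseteq X$ and applying Kalai and Meshulam's theorem to the pair $(X^+, M^{(k)})$, where $M^{(k)}$ denotes the truncation of $M$ to rank $k$. The desired conclusion $\rho(V\setminus\tau)\leq k-1$ is equivalent to $V\setminus\tau$ being non-spanning in $M^{(k)}$, so that theorem will supply the rank bound once both the containment $M^{(k)}\subseteq X^+$ and the $d$-Leray property of $X^+$ are established.

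I would define $X^+$ by adjoining to $X$ every $(d+1)$-subset $T\subseteq V$ for which there exists a \emph{filler vertex} $v \in V$ satisfying $v * \partial T \subseteq X$. Using the hypothesis, one verifies $M^{(k)} \subseteq X^+$ as follows: given any $M$-independent set $I$ of size at most $k$, extend $I$ to a set $U \subseteq V$ of size $d+m$ with $\rho(U) \geq k$ (possible since $\rho(V) = r \geq k$ and $|V| \geq d+m$); then apply the hypothesis to $U$. For each $(d+1)$-subface $T$ of $U$ containing $I$, either $T \in X$ directly or a filler vertex in $U \setminus T$ forces $T \in X^+$; downward closure of $X^+$ then yields $I \in X^+$.

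The main obstacle will be showing that $X^+$ remains $d$-Leray. The guiding intuition is that each added simplex $T$ comes with a filler vertex $v$ satisfying $v * \partial T \subseteq X$, so $\partial T$ already bounds a disk in $X$, and adding $T$ cannot create new $d$-dimensional homology in any induced subcomplex $X^+[W]$ containing $v$. The delicate case is when $v \notin W$; there, I would invoke the hypothesis inside the restricted matroid $M|_W$ to locate an alternative filler for $T$ within $W$, or argue via a Mayer-Vietoris decomposition of $X^+[W]$ along the newly added top faces to show that no new $d$-cycles arise. This topological verification is where I expect the heart of the argument to lie.

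Once the $d$-Lerayness of $X^+$ is secured, Theorem~\ref{thm:kalai_meshulam_leray} applied to $(X^+, M^{(k)})$ produces some $\tau \in X^+$ with $\rho_{M^{(k)}}(V\setminus\tau) \leq d$, which (at least for $k \geq d+1$) implies $\rho(V\setminus\tau) \leq k-1$. A final transfer step---replacing any element $u \in \tau$ lying in an added simplex $T$ by the corresponding filler vertex $v$, using $(T \setminus \{u\}) \cup \{v\} \in X$ together with a suitable choice of $u$ in the matroid closure of $V \setminus T$---would produce a simplex $\tau' \in X$ with $\rho(V \setminus \tau') \leq k-1$, completing the proof.
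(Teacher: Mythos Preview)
Your central claim --- that the enlarged complex $X^+$ remains $d$-Leray --- is false in general, even under the hypothesis of the theorem. Here is a small counterexample with $d=1$, $m=2$, $k=r=2$. Take $V=\{1,2,3,4\}$, let $X$ be the star with maximal faces $\{1,4\},\{2,4\},\{3,4\}$ (a tree, hence $1$-Leray), and let $M$ be the rank-$2$ matroid in which $1,2,3$ are parallel and $4$ is independent from them. The hypothesis of the theorem holds: the only $3$-sets $U$ with $\rho(U)\geq 2$ are those containing $4$, and for any such $U=\{i,j,4\}$ one of the pairs $\{i,4\}$ or $\{j,4\}$ lies in $X$. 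Yet each pair in $\{1,2,3\}$ acquires the filler vertex $4$, so $X^+$ contains the full $1$-skeleton of $\{1,2,3\}$ with no $2$-face, and $X^+[\{1,2,3\}]$ is a $3$-cycle with $\tilde H_1\neq 0$. Your proposed rescue --- finding an alternative filler inside $W=\{1,2,3\}$ via the hypothesis restricted to $M|_W$ --- cannot work here, since $\rho(\{1,2,3\})=1<k$, so the hypothesis says nothing about this set. The difficulty is intrinsic: the definition of $X^+$ ignores the matroid, so the places where $X^+$ develops new homology need not be the places where the rank condition lets you invoke the hypothesis.

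There is also a secondary gap: for $k>d+1$ the independent sets of $M^{(k)}$ have size up to $k>d+1$, but $X^+$ adjoins only $(d{+}1)$-sets, so $M^{(k)}\subseteq X^+$ cannot hold unless those larger independent sets were already in $X$. You cannot appeal to Lemma~\ref{lemma:helly_d_leray} to fill them in, since that presupposes the very $d$-Lerayness you are trying to establish.

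The paper proceeds quite differently. The case $m=1$ is handled by applying Lemma~\ref{lemma:kalai_meshulam} not to a truncation but to the \emph{tolerance matroid} $M^{d+1-k}$ of Section~\ref{sec:tolerance_matroids}, whose rank function absorbs the shift from $d$ to $k-1$. For $m>1$ the argument is an induction on $m$: if the $(m{-}1)$-hypothesis already holds for $k-1$ one is done; otherwise there is a specific $(d{+}1)$-set $A\notin X$ witnessing its failure, and a structural lemma (Lemma~\ref{lem:U_is_a_simplex}) shows that the set of all filler vertices for $A$, together with any $d$-face of $A$, is itself a simplex of $X$. This produces the desired $\tau$ directly inside $X$, with no need to enlarge the complex.
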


\begin{remark}
    Note that the condition in Theorem \ref{thm:very_colorful_helly_d_coll} is implied by the condition in Theorem \ref{thm:very_colorful_helly_d_leray}, and that both conditions are implied by the following stronger (but perhaps more natural) condition: ``For every subset $U$ of $V$ of size $d+m$ with $\rho(U) \geq k$, all but possibly at most $m-1$ of the the subsets of $U$ of size $d+1$ are simplices in $X$".
    Moreover, note that all three conditions are equivalent for $m\leq 2$.
\end{remark}

As an immediate consequence of Theorem \ref{thm:very_colorful_helly_d_coll}, we obtain the following:

\begin{theorem}\label{thm:main_geometric}
    Let $d\geq 1$, $r\geq d+1$, $1\leq m\leq r$ and $m\leq k\leq \min\{m+d,r\}$.
    
    Let $\mathcal{C}$ be a finite family of convex sets in $\Rea^d$, colored with $r$ different colors, of size $|\mathcal{C}|\geq \max\{m+d,r\}$. Assume that for every family $\{A_1,\ldots,A_d,B_1,\ldots,B_m\}\subset \mathcal{C}$,  colored with at least $k$ different colors, at least one of the subfamilies of the form $\{A_1,\ldots,A_d,B_i\}$, for $i\in[m]$, is intersecting.
    Then, there are $r-k+1$ color classes whose union is intersecting.
\end{theorem}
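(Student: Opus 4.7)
The plan is to derive Theorem \ref{thm:main_geometric} from Theorem \ref{thm:very_colorful_helly_d_coll} by translating the geometric setup into the corresponding abstract setting of a $d$-collapsible complex equipped with a matroid.

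Specifically, I would take $V = \mathcal{C}$ and let $X$ be the nerve of $\mathcal{C}$. Since $\mathcal{C}$ is a family of convex sets in $\Rea^d$, the complex $X$ is $d$-representable, hence $d$-collapsible by Wegner's theorem. Next, I would let $M$ be the partition matroid on $V$ whose independent sets are the subfamilies of $\mathcal{C}$ containing at most one set from each color class. Then $M$ has rank $r$, and for any $S \subseteq V$ the rank $\rho(S)$ equals the number of distinct colors represented in $S$. Under this dictionary, ``a subfamily is intersecting'' becomes ``it is a simplex of $X$'', and ``colored with at least $k$ colors'' becomes ``has rank at least $k$''.

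With this translation in place, the hypothesis of Theorem \ref{thm:main_geometric} becomes exactly the hypothesis of Theorem \ref{thm:very_colorful_helly_d_coll}: for every $U=\{u_1,\ldots,u_d,v_1,\ldots,v_m\}\subset V$ with $\rho(U)\geq k$, some $\{u_1,\ldots,u_d,v_i\}$ lies in $X$. Applying Theorem \ref{thm:very_colorful_helly_d_coll} then produces a simplex $\tau\in X$ with $\rho(V\setminus\tau)\leq k-1$. This means that at most $k-1$ colors appear among the sets of $\mathcal{C}$ outside $\tau$, so at least $r-k+1$ color classes are entirely contained in the vertex set of $\tau$. Since $\tau$ is a simplex of the nerve, the sets corresponding to its vertices share a common point; in particular, the union of these $r-k+1$ color classes is intersecting, which is the desired conclusion.

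There is essentially no obstacle here beyond the bookkeeping needed to confirm that the partition matroid has the claimed rank function and that the numerical parameters $d,r,m,k$ and the size condition $|\mathcal{C}|\geq \max\{m+d,r\}$ carry over term for term; this is why the geometric statement is advertised as an immediate consequence of the combinatorial one.
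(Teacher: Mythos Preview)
Your proposal is correct and follows essentially the same approach as the paper: take $X$ to be the nerve of $\mathcal{C}$ (which is $d$-collapsible by Wegner's theorem), let $M$ be the partition matroid associated to the coloring, verify that the hypothesis of Theorem~\ref{thm:very_colorful_helly_d_coll} is satisfied, and then read off from $\rho(V\setminus\tau)\leq k-1$ that at least $r-k+1$ entire color classes lie in the intersecting subfamily $\tau$.
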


In the special case $m=1$ and $r=d+1$, we recover Theorem \ref{thm:arocha_et_al}. For $m=2$ and $r=k=d+2$, we recover Theorem \ref{thm:montejano_karasev}.

As an application of Theorem \ref{thm:main_geometric}, we obtain the following extension of Tverberg's Theorem \cite{tverberg1966generalization}.

Let $A\subset\Rea^d$ be a finite set of points, and let $A=A_1\cup\ldots\cup A_r$ be a partition of $A$.
We say that $A_1,\ldots, A_r$ is a \emph{Tverberg partition} of $A$ if $\cap_{i=1}^r \conv(A_i)\neq \emptyset$.
Let $B\subset A$. We say that $B\subsetneq A$ is a \emph{Tverberg center} for the partition $A=A_1\cup\cdots\cup A_r$ if, for every $p\in A\setminus B$, 
\[
    \bigcap_{i=1}^r \conv\left((B\cup\{p\})\cap A_i\right)\neq \emptyset.
\]
In other words, $B$ is a Tverberg center for $A=A_1\cup \ldots\cup A_r$ if, for every $p\in A\setminus B$, the partition of $B\cup \{p\}$ induced by $A_1,\ldots,A_r$ is a Tverberg partition.

\begin{theorem}\label{thm:tverberg_sunflower}
Let $d\geq 1$, $r\geq 2$, and let $n=(r-1)(d+1)$.
Let $A\subset \Rea^d$ be a finite set of points of size larger than $n$. Then, there exists a partition $A_1,\ldots,A_r$ of $A$ that has a Tverberg center of size $n$.
\end{theorem}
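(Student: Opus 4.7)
The plan is to deduce Theorem~\ref{thm:tverberg_sunflower} from Theorem~\ref{thm:main_geometric} via a reduction to a colorful Helly--type statement in the Sarkaria lifted space $\Rea^{n}$, where $n=(r-1)(d+1)$. Fix vectors $w_1,\ldots,w_r\in\Rea^{r-1}$ in general position with $\sum_{i=1}^r w_i=0$, and for each $p\in A$ and $i\in[r]$ set $v_{p,i}=(p,1)\otimes w_i\in\Rea^{n}$ and $V_p=\conv\{v_{p,1},\ldots,v_{p,r}\}$; each $V_p$ contains the origin because $\tfrac{1}{r}\sum_i v_{p,i}=0$. By Sarkaria's lemma, for any finite $S\subseteq A$ and any map $\pi\colon S\to[r]$, the $r$ convex hulls $\conv(\pi^{-1}(i))\subseteq\Rea^d$, $i\in[r]$, share a common point if and only if $0\in\conv\{v_{p,\pi(p)}\colon p\in S\}$ in $\Rea^n$. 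The theorem is therefore equivalent to producing $\pi\colon A\to[r]$ and $B\subseteq A$ with $|B|=n$ such that $0\in\conv\{v_{p,\pi(p)}\colon p\in B\cup\{p_0\}\}$ for every $p_0\in A\setminus B$.

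\textbf{Core step.} I would then apply Theorem~\ref{thm:main_geometric} in $\Rea^n$ to a carefully chosen family $\mathcal{C}$ of convex sets built from the Sarkaria vectors $v_{p,i}$---natural candidates include cones generated by the $v_{p,i}$, translated half-spaces of the form $\{x\colon\langle x,v_{p,i}\rangle\le c\}$, or the facet-opposite polytopes $\conv(V_p\setminus\{v_{p,i}\})$---and colored by the points of $A$, so that each of the $|A|$ color classes in Theorem~\ref{thm:main_geometric} contains $r$ convex sets encoding the label choice $i\in[r]$ at $p$. The parameters $m,k$ would be chosen so that $|A|-k+1=n$, forcing the conclusion to single out exactly $n$ color classes---that is, $n$ points of $A$ forming the desired center $B$---together with a common intersection point in $\Rea^n$ that, via Sarkaria duality, determines the labels $\pi(p)$ for $p\in B$ and the extensions $\pi(p_0)$ for $p_0\in A\setminus B$. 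The hypothesis of Theorem~\ref{thm:main_geometric}, namely that every $(n+m)$-subfamily of $\mathcal{C}$ with at least $k$ distinct colors contains an intersecting $(n+1)$-sub-subfamily, would be verified using the classical Tverberg theorem on $(n+1)$-point subsets of $A$ combined with Sarkaria's lemma: each such subset admits a Tverberg partition, which translates through the Sarkaria correspondence into the required intersecting sub-subfamily. Unpacking the conclusion then yields a partition $A_1,\ldots,A_r$ of $A$ and a center $B$ of size $n$ such that $\bigcap_{i=1}^r\conv((B\cup\{p_0\})\cap A_i)\neq\emptyset$ for every $p_0\in A\setminus B$.

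\textbf{Main obstacle.} The main technical challenge is the precise construction of the family $\mathcal{C}$ and calibration of $(m,k)$. A na\"ive choice such as $\mathcal{C}=\{V_p\colon p\in A\}$ collapses the problem because every $V_p$ contains the origin, making every sub-intersection trivially non-empty; one must therefore work with a refined family whose sets genuinely encode the label assignment $i\in[r]$ at each point, so that intersection patterns in $\Rea^n$ truly carry Sarkaria's $0\in\conv$ condition on $(n+1)$-rainbows. Calibrating $m$ and $k$ so that the hypothesis of Theorem~\ref{thm:main_geometric} follows cleanly from classical Tverberg on $(n+1)$-subsets of $A$, and so that the conclusion produces exactly $n$ color classes matching $|B|=n$, is where the real combinatorial and geometric content of the argument lies.
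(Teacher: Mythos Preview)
Your overall architecture---Sarkaria's tensor lift to $\Rea^n$, coloring by the points of $A$, and then invoking a colorful Helly--type result---is exactly the route the paper takes. The gap is in how you propose to apply Theorem~\ref{thm:main_geometric}, where you have the logical direction reversed.

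You plan to \emph{verify the hypothesis} of Theorem~\ref{thm:main_geometric} (using classical Tverberg on $(n{+}1)$-subsets) and then \emph{read off $B$ from its conclusion}, calibrating $|A|-k+1=n$ so the conclusion singles out $n$ color classes. But the conclusion of Theorem~\ref{thm:main_geometric} only says that some $r-k+1$ color classes have intersecting union; in any Sarkaria-type duality this translates to ``origin \emph{not} in the convex hull of the corresponding lifted vectors,'' which is the opposite of what you need. Likewise, classical Tverberg on an $(n{+}1)$-subset gives $0\in\conv(\cdot)$ for some labeling, which in the dual means a \emph{non}-intersecting subfamily---again the wrong sign for verifying the hypothesis. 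So as stated, neither your hypothesis check nor your conclusion extraction goes through.

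The paper resolves exactly the obstacle you flag by first passing to the contrapositive Carath\'eodory form (Theorems~\ref{thm:caratheodory} and~\ref{thm:caratheodory_special}): take the open half-spaces $H_v=\{x:\langle x,v\rangle>0\}$ for each lifted vector $v=v_{p,i}$, so that ``intersecting'' dualizes precisely to ``$0\notin\conv$.'' The correct parameters are $k=r=|A|$ (so $r-k+1=1$, not $n$); the hypothesis of the Carath\'eodory version is then simply ``each single color class contains the origin,'' which is trivially true since $\tfrac1r\sum_i v_{p,i}=0$. The \emph{conclusion} of the Carath\'eodory version is what delivers the colorful set $\{u_1,\ldots,u_n,v_1,\ldots,v_m\}$ with $0\in\conv(\{u_1,\ldots,u_n,v_i\})$ for every $i$; the $u$'s determine $B$ and the colorful structure determines the partition. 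In short: your family $\mathcal{C}$ should be the half-spaces $H_{v_{p,i}}$, and you should use Theorem~\ref{thm:main_geometric} in its contrapositive (Carath\'eodory) orientation with $k=r=|A|$, not in the forward direction with $k=|A|-n+1$.
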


For $|A|=n+1$, this is exactly Tverberg's theorem. Theorem \ref{thm:tverberg_sunflower} is sharp in the sense that we cannot guarantee a Tverberg center of size smaller than $(r-1)(d+1)$: Let $A$ be a generic set of points in $\Rea^d$ (in the sense that the $d|A|$ coordinates of $A$ are algebraically independent over the rationals). Assume for contradiction that $A$ has a partition $A_1,\ldots, A_r$ with a Tverberg center $B$ of size $k<(r-1)(d+1)$. Then, for every $p\in A\setminus B$, $B'=B\cup\{p\}$ is a generic set of $k+1\leq (r-1)(d+1)$ points in $\Rea^d$, with a Tverberg partition into $r$ parts. However, it is well know that such a set does not have a Tverberg partition into $r$ parts (see e.g. \cite{tverberg1966generalization}). In fact, the condition of genericity can be replaced by the weaker property of ``strong general position" introduced by Reay in \cite{reay1968extension} (under the name ``strong independence"); see also  \cite{perles2014strong,perles2017tverberg}.

Theorem \ref{thm:tverberg_sunflower} is closely related to the ``tolerant Tverberg Theorem" (see e.g. \cite{larman1972sets,colin2007applying,soberon2012generalisation,garcia2015projective,garcia2017note}). Using Theorem \ref{thm:tverberg_sunflower}, we obtain a new proof of the following result from \cite{soberon2012generalisation}:

\begin{theorem}[Sober{\'o}n-Strausz {\cite[Theorem 1]{soberon2012generalisation}}]
\label{thm:soberon_strausz}
  Let $d\geq 1$, $r\geq 2$ and $t\geq 0$. Let $A\subset\Rea^d$ be a finite set of points of size at least $(t+1)(r-1)(d+1)+1$. Then, there exists a partition $A_1,\ldots,A_r$ of $A$ such that, for any $C\subset A$ of size $t$,
  \[
    \bigcap_{i=1}^r \conv(A_i\setminus C)\neq\emptyset.
  \]
\end{theorem}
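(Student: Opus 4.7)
The plan is to prove Theorem \ref{thm:soberon_strausz} by induction on the tolerance parameter $t$, in each inductive step invoking Theorem \ref{thm:tverberg_sunflower} to produce a partition with a structured Tverberg center, and using the inductive hypothesis to control the rest.

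For the base case $t=0$, the statement reduces to Tverberg's theorem itself, which is recovered from Theorem \ref{thm:tverberg_sunflower} in the extremal case $|A| = n+1$: the Tverberg center $B$ has size $n$, and since $A \setminus B$ consists of a single point $p$, the partition $\pi|_{B\cup\{p\}} = \pi|_A$ is Tverberg. For the inductive step, assume the statement for $t-1$ and consider $A$ with $|A|\geq (t+1)n+1$. Apply Theorem \ref{thm:tverberg_sunflower} to $A$ to obtain a partition $\pi=(A_1,\dots,A_r)$ with Tverberg center $B$ of size $n$. Since $|A\setminus B|\geq tn+1 = ((t-1)+1)n+1$, the inductive hypothesis yields a $(t-1)$-tolerant Tverberg partition $\sigma=(\sigma_1,\dots,\sigma_r)$ of $A\setminus B$. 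Combine the two by setting $\tau_i := \sigma_i \cup (B\cap A_i)$, so that $\tau$ uses $\sigma$ on $A\setminus B$ and agrees with $\pi$ on $B$.

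Next, I would verify $t$-tolerance of $\tau$ by a case split on $C\subset A$ with $|C|\leq t$. If $C\cap B\neq\emptyset$, then $|C\cap(A\setminus B)|\leq t-1$, and the $(t-1)$-tolerance of $\sigma$ gives $\cap_i\conv(\sigma_i\setminus C)\neq\emptyset$; since $\tau_i\setminus C\supseteq\sigma_i\setminus C$, the desired intersection is nonempty. If $C\cap B=\emptyset$, then $C\subset A\setminus B$, and since $|A\setminus B|\geq tn+1 > t$ we can pick $p\in (A\setminus B)\setminus C$. The Tverberg center property of $B$ for $\pi$ then produces a common point $x\in\cap_i\conv((B\cup\{p\})\cap A_i)$, and one checks that $\tau_i\setminus C\supseteq B\cap A_i$ for all $i$, so $x\in\conv(\tau_i\setminus C)$ holds for every part $i\neq\pi(p)$.

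The main obstacle is exactly this last case: for the part $i=\pi(p)$, the center point $x$ need not lie in $\conv(\tau_i\setminus C)$ unless $\tau$ assigns $p$ to the same part as $\pi$ does. Closing this gap will require coordinating the two partitions, which I expect to be the technical heart of the argument. A natural approach is to use the freedom in choosing $\sigma$ to guarantee agreement with $\pi$ on sufficiently many points of $A\setminus B$: for instance, by an appropriate relabeling of the color classes of $\sigma$, or by first fixing $\sigma$ and then invoking Theorem \ref{thm:tverberg_sunflower} in a refined form that produces a Tverberg center compatible with the assignment of $\sigma$. Alternatively, one may iterate Theorem \ref{thm:tverberg_sunflower} across nested subsets $A\supsetneq A\setminus B\supsetneq \ldots$, producing a nested family of Tverberg centers $B^{(0)},\dots,B^{(t)}$ that together witness $t$-tolerance. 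In either case, the crux is to ensure that the single partition $\tau$ simultaneously inherits the disjoint-center property of $\pi$ and the lower-tolerance property of $\sigma$.
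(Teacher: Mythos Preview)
Your overall strategy matches the paper's: induction on $t$, with Theorem~\ref{thm:tverberg_sunflower} applied to $A$ to get a partition $\pi$ with center $B$, the induction hypothesis applied to $A\setminus B$ to get a $(t-1)$-tolerant partition $\sigma$, and a combined partition that uses $\pi$ on $B$ and $\sigma$ on $A\setminus B$. Your case split is also the same, and you have correctly isolated the one genuine obstacle: in the case $C\cap B=\emptyset$, the chosen witness $p\in (A\setminus B)\setminus C$ only works if $\sigma$ and $\pi$ place $p$ in the \emph{same} part.

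The paper closes this gap precisely along the first line you suggest---relabeling the parts of $\sigma$---and the missing ingredient is a short averaging argument. For each permutation $\pi\in S_r$, let $D_\pi=\bigcup_{i=1}^r A_i\cap \sigma_{\pi(i)}$ be the set of points in $A\setminus B$ on which the two partitions (after relabeling $\sigma$ by $\pi$) agree. A direct double count gives
\[
\frac{1}{r!}\sum_{\pi\in S_r}|D_\pi|=\frac{|A\setminus B|}{r}>t,
\]
so some $\pi$ satisfies $|D_\pi|\geq t+1$. Fix such a $\pi$ and set $\tau_i=(A_i\cap B)\cup \sigma_{\pi(i)}$. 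Now in the case $C\subset A\setminus B$ with $|C|\leq t$, one can choose $p\in D_\pi\setminus C$; since $p$ lies in the same part under both $\pi$ and $\tau$, the Tverberg-center property of $B$ gives a point of $\bigcap_i\conv(\tau_i\setminus C)$ with no exceptional part. The other case ($C\cap B\neq\emptyset$) goes through exactly as you wrote. So your proposal is essentially complete once this pigeonhole/averaging step is inserted; your alternative suggestions (refining Theorem~\ref{thm:tverberg_sunflower}, or iterating centers) are unnecessary.
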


Our paper is organized as follows. In Section \ref{sec:prelims} we present some basic facts about simplicial complexes and matroids that we will use later. In Section \ref{sec:tolerance_matroids} we introduce the notion of ``tolerance complexes" of matroids, which is used in the proofs of our main results. In Section \ref{sec:dcoll} we present the proof of Theorem \ref{thm:very_colorful_helly_d_coll}, and in Section \ref{sec:dleray} we present the proof of Theorem \ref{thm:very_colorful_helly_d_leray}.
Section \ref{sec:apps} deals with the geometric applications of our results, and contains the proofs of Theorems \ref{thm:main_geometric}, \ref{thm:tverberg_sunflower} and \ref{thm:soberon_strausz}. We conclude in Section \ref{sec:conclusion} with some open problems arising from our work.

\section{Preliminaries}\label{sec:prelims}

\subsection{Simplicial complexes}

A \emph{simplicial complex} $X$ is a family of subsets of some finite set $V$, such that for every $\sigma\in X$ and $\sigma'\subset \sigma$, we have $\sigma'\in X$. The set $V = V(X)$ is called the \emph{vertex set} of $X$. The sets in $X$ are called the \emph{simplices} or \emph{faces} of the complex. The \emph{dimension} of a simplex $\sigma\in X$ is  $|\sigma|-1$, and the dimension of $X$, denoted by $\dim(X)$, is the maximal dimension of a simplex in $X$.
We denote by $2^V=\{\sigma:\, \sigma\subset U\}$ the \emph{complete complex} on vertex set $V$.

A \emph{subcomplex} of $X$ is a simplicial complex $X'$ such that each simplex of $X'$ is also a simplex of $X$.
For $U\subset V$, the subcomplex of $X$ \emph{induced} by $U$ is the complex
\[
X[U]=\{\sigma\in X:\, \sigma\subset U\}.
\]
Let $\sigma$ be a simplex in $X$.
We define the \emph{link} of $\sigma$ in $X$ to be the subcomplex
\[
\lk(X,\sigma)= \{\tau\in X: \, \sigma\cap\tau=\varnothing,\, \sigma\cup\tau \in X\},
\] 
and the \emph{costar} of $\sigma$ in $X$ to be the subcomplex
\[
\cost(X,\sigma) = \{\tau\in X:\, \sigma \not\subset\tau\}.
\]
If $\sigma$ is a $0$-dimensional simplex, i.e. if $\sigma = \{v\}$ for some vertex $v \in V$, we simply write $\lk(X,v)=\lk(X,\{v\})$, 
and $X\setminus v= \cost(X,\{v\})=X[V\setminus \{v\}]$.

Let $X$ and $Y$ be simplicial complexes on disjoint vertex sets.
The \emph{join} of $X$ and $Y$ is the simplicial complex on $V(X) \cup V(Y)$ defined as
\[
X \ast Y= \{ \sigma\cup \tau:\,  \sigma\in X, \tau\in Y\}.
\]

The following well known special case of the Mayer-Vietoris exact sequence relates the homology of a complex $X$ to that of the link and costar of one of its vertices (see e.g. \cite[Theorem 2.2]{kim2021leray} for a detailed proof).

\begin{theorem}\label{thm:exact_sequence_link_costar}
Let $X$ be a simplicial complex on vertex set $V$, and let $v\in V$. Then, there is an exact sequence
\[
\cdots\to
\zhomology{k}{\lk(X,v)} \to \zhomology{k}{X\setminus v}\to \zhomology{k}{X}\to \zhomology{k-1}{\lk(X,v)}\to \cdots
\]
\end{theorem}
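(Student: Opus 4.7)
The plan is to deduce this from the standard Mayer--Vietoris long exact sequence applied to a natural decomposition of $X$. Introduce the (closed) star of $v$ in $X$, defined as
\[
\st(X,v) = \{\sigma \in X : \sigma \cup \{v\} \in X\} = \{v\} \ast \lk(X,v),
\]
where the second equality is simply the observation that every simplex of $X$ containing $v$ can be written uniquely as $\{v\} \cup \tau$ with $\tau \in \lk(X,v)$, and faces of such a simplex are either of this form or lie in $\lk(X,v)$.

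With this, I would verify the two set-theoretic identities
\[
X = \cost(X,v) \cup \st(X,v), \qquad \cost(X,v) \cap \st(X,v) = \lk(X,v).
\]
The first holds because every simplex of $X$ either avoids $v$ (landing in $\cost(X,v) = X \setminus v$) or contains $v$ (landing in $\st(X,v)$, together with its $v$-free faces which lie in $\lk(X,v) \subseteq \st(X,v)$). The second holds because a simplex in both subcomplexes must avoid $v$ yet form a simplex when $v$ is adjoined, which is exactly the defining condition of $\lk(X,v)$.

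Next, I would invoke the reduced Mayer--Vietoris sequence for the pair $(\cost(X,v),\st(X,v))$, which yields
\[
\cdots \to \zhomology{k}{\lk(X,v)} \to \zhomology{k}{X\setminus v} \oplus \zhomology{k}{\st(X,v)} \to \zhomology{k}{X} \to \zhomology{k-1}{\lk(X,v)} \to \cdots
\]
The final step is to observe that $\st(X,v) = \{v\} \ast \lk(X,v)$ is a cone with apex $v$, hence contractible, so $\zhomology{k}{\st(X,v)} = 0$ for every $k$. Substituting this vanishing into the Mayer--Vietoris sequence collapses the direct sum and produces exactly the claimed exact sequence.

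I do not expect any genuine obstacle: the entire argument is a standard unpacking of Mayer--Vietoris. The only points requiring a bit of care are working consistently with reduced (rather than unreduced) homology so that the sequence terminates correctly in low degrees, and treating the edge case where $v$ is isolated in $X$, in which $\lk(X,v) = \{\varnothing\}$ contributes only $\zhomology{-1}{\lk(X,v)} \cong \mathbb{Z}$; in that case the sequence still holds and simply reflects that $X$ is the disjoint union of $\{v\}$ with $X \setminus v$.
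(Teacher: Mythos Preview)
Your argument is correct and is precisely the standard Mayer--Vietoris derivation. The paper does not give its own proof of this theorem; it simply identifies the statement as ``a well known special case of the Mayer--Vietoris exact sequence'' and refers the reader to \cite[Theorem~2.2]{kim2021leray} for details, so your proposal matches both the paper's description and the intended approach.
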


We will need the following equivalent definition of $d$-Leray complexes, due to Kalai and Meshulam.

\begin{lemma}[{\cite[Proposition 3.1]{kalai2006intersections}}]\label{lemma:km_dleray}
A simplicial complex $X$ is $d$-Leray if and only if $\tilde{H}_k(\lk(X,\sigma))=0$ for all $\sigma\in X$ and $k\geq d$.  
\end{lemma}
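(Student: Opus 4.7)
The plan is to prove both directions via induction, with the Mayer--Vietoris exact sequence from Theorem \ref{thm:exact_sequence_link_costar} as the workhorse. Two elementary identities underlie everything: $\lk(X,\sigma\cup\{v\})=\lk(\lk(X,\sigma),v)$, and, for $v\notin U$, $\lk(X,v)[U]=\lk(X[U\cup\{v\}],v)$; the second says that an induced subcomplex of a link is itself a link inside an induced subcomplex of $X$.

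For the forward direction, I would first establish the intermediate claim that $d$-Lerayness is inherited by vertex links: if $X$ is $d$-Leray and $v\in V$, then $\lk(X,v)$ is $d$-Leray. By the second identity above, the arbitrary induced subcomplex $\lk(X,v)[U]$ equals $\lk(Y,v)$ where $Y=X[U\cup\{v\}]$ is itself $d$-Leray. The relevant fragment of the exact sequence applied to $Y$ and $v$,
\[
\tilde{H}_{k+1}(Y)\to \tilde{H}_k(\lk(Y,v))\to \tilde{H}_k(Y\setminus v),
\]
has both outer terms vanishing for $k\geq d$, since $Y$ and $Y\setminus v = Y[V(Y)\setminus\{v\}]$ are induced subcomplexes of $X$; exactness forces the middle term to vanish. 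Hence $\lk(X,v)$ is $d$-Leray. Induction on $|\sigma|$, using $\lk(X,\sigma)=\lk(\lk(X,v),\sigma\setminus\{v\})$ for any chosen $v\in\sigma$, then promotes this to $\lk(X,\sigma)$ being $d$-Leray for every $\sigma\in X$, which in particular yields the required homological vanishing.

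For the backward direction, I would induct on $|V|$. The case $\sigma=\emptyset$ of the hypothesis gives $\tilde{H}_k(X)=0$ for $k\geq d$, handling the induced subcomplex $X=X[V]$ itself. For a proper induced subcomplex $X[U]$, choose $v\in V\setminus U$; it suffices to show that $X\setminus v$ satisfies the same link-vanishing hypothesis, since then $X\setminus v$ is $d$-Leray by the inductive hypothesis and $X[U]$ is one of its induced subcomplexes. To verify the hypothesis for $X\setminus v$: for $\sigma\in X\setminus v$, we have $\lk(X\setminus v,\sigma)=\lk(X,\sigma)\setminus v$, and Theorem \ref{thm:exact_sequence_link_costar} applied to $\lk(X,\sigma)$ and the vertex $v$ produces the fragment
\[
\tilde{H}_k(\lk(X,\sigma\cup\{v\}))\to \tilde{H}_k(\lk(X,\sigma)\setminus v)\to \tilde{H}_k(\lk(X,\sigma)),
\]
whose outer terms vanish for $k\geq d$ directly from the standing hypothesis (if $v\notin V(\lk(X,\sigma))$ the situation is degenerate, as $\lk(X,\sigma)\setminus v=\lk(X,\sigma)$).

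The main obstacle is less conceptual than bookkeeping: tracking how links, costars, and induced subcomplexes interact, especially through the identity $\lk(X,v)[U]=\lk(X[U\cup\{v\}],v)$ that allows the hypothesis of $d$-Lerayness on $X$ to be transferred to links. Once this identity is in hand, both directions become essentially mechanical applications of the long exact sequence.
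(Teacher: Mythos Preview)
The paper does not actually prove this lemma; it is stated with a citation to Kalai and Meshulam \cite{kalai2006intersections} and used as a black box. So there is no ``paper's proof'' to compare against.

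That said, your argument is correct. Both directions are handled cleanly by the Mayer--Vietoris sequence of Theorem~\ref{thm:exact_sequence_link_costar}, and the two identities you isolate---$\lk(X,\sigma\cup\{v\})=\lk(\lk(X,\sigma),v)$ and $\lk(X,v)[U]=\lk(X[U\cup\{v\}],v)$---are exactly what is needed to transfer the vanishing hypotheses back and forth between $X$, its induced subcomplexes, and its links. The forward direction in fact proves the stronger statement that every link $\lk(X,\sigma)$ is itself $d$-Leray, which is more than the lemma asks but costs nothing extra. In the backward direction, your handling of the degenerate case $v\notin V(\lk(X,\sigma))$ is necessary and correctly dispatched. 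This is essentially the standard proof of the Kalai--Meshulam equivalence.
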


Recall that, given a free face $\sigma\in X$ of size $|\sigma|\leq d$, an elementary $d$-collapse is the operation consisting of removing $\sigma$ and all simplices containing it from $X$. Note that the complex obtained after this operation is exactly $\cost(X,\sigma)$. We denote such an elementary $d$-collapse by $X\to X'=\cost(X,\sigma)$.
The following equivalent definition of $d$-collapsibility was observed by Tancer in \cite{tancer2010d} (see also \cite[Lemma 1]{Weg75} for a similar statement):

\begin{lemma}[{\cite[Lemma 5.2]{tancer2010d}}]\label{lemma:tancer}
    A simplicial complex $X$ is $d$-collapsible if and only if there is a sequence of elementary $d$-collapses \[X=X_1\to X_2\to \cdots\to X_t,\] such that the free face in each elementary $d$-collapse is of size exactly $d$, and $\dim(X_t)< d-1$.
\end{lemma}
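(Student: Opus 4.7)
The proof splits into two directions. For the backward direction, I would observe that any simplicial complex $Y$ with $\dim(Y) < d-1$ is itself $d$-collapsible: iteratively pick a maximal face $\eta$ of the current complex (which, being maximal, is a free face of itself of size $|\eta| \leq d-1 \leq d$) and perform the corresponding elementary $d$-collapse, finishing with an elementary collapse of the empty face if only $\{\emptyset\}$ remains. Concatenating such a sequence for $X_t$ with the given sequence $X = X_1 \to \cdots \to X_t$ produces a $d$-collapsing of $X$ to $\emptyset$, so $X$ is $d$-collapsible.

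For the forward direction, the heart of the argument is the following key claim: if $X$ is $d$-collapsible and $\dim(X) \geq d-1$, then $X$ has a free face of size exactly $d$. To establish it I would fix any $d$-collapsing sequence $(\sigma_1, \sigma_2, \ldots)$ of $X$ with associated maximal covers $\tau_i$ (so $\sigma_i$ is free in $X_i$ with unique maximal face $\tau_i$, and $|\sigma_i| \leq d$). Since $\dim(X) \geq d-1$, some face of $X$ of size $d$ must be removed during the sequence; this forces $|\tau_i| \geq d$ at the step where it is removed, so the smallest index $j$ with $|\tau_j| \geq d$ is well-defined. For $i<j$ every face removed has size $\leq |\tau_i| < d$, hence the faces of $X_j$ of size $\geq d$ coincide with those of $X$.

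Now pick any $d$-subset $\sigma' \subset \tau_j$ containing $\sigma_j$. Because $\sigma_j$ is free in $X_j$ with unique maximal face $\tau_j$, every face of $X_j$ containing $\sigma'$ must also contain $\sigma_j$ and is therefore a subset of $\tau_j$; and since $|\sigma'| = d$, every face of $X$ containing $\sigma'$ has size $\geq d$ and hence already lies in $X_j$. Combining, $\sigma'$ is a free face of $X$ with unique maximal cover $\tau_j$, and $|\sigma'|=d$, proving the key claim.

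To complete the proof I would argue by induction on the number of simplices of $X$. If $\dim(X) < d-1$, the trivial sequence ($t=1$) suffices. Otherwise, use the key claim to select a free face $\sigma$ of $X$ of size exactly $d$, collapse it to obtain $X^*$, invoke the standard fact that $d$-collapsibility is preserved by elementary $d$-collapses to conclude that $X^*$ remains $d$-collapsible, and apply the induction hypothesis to $X^*$; prepending the collapse of $\sigma$ to the resulting sequence gives one of the required form for $X$. The main obstacle here is precisely this preservation fact---that an elementary $d$-collapse of a $d$-collapsible complex again yields a $d$-collapsible complex---which is handled via a diamond-style reordering argument on collapse sequences, as in \cite{Weg75, tancer2010d}. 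Everything else reduces to repeated application of the key claim, and termination is immediate since each elementary collapse strictly decreases the number of simplices.
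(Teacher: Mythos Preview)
The paper does not give its own proof of this lemma; it is simply quoted from \cite{tancer2010d}. Your argument is correct and is essentially the one found there: the backward direction is immediate, and for the forward direction the key observation is that a $d$-collapsible complex of dimension at least $d-1$ has a free face of size exactly $d$ (obtained by enlarging $\sigma_j$ inside $\tau_j$ at the first step $j$ where $|\tau_j|\geq d$), after which one inducts. You have also correctly isolated the only nontrivial ingredient, namely that performing an arbitrary elementary $d$-collapse on a $d$-collapsible complex again yields a $d$-collapsible complex; this confluence/reordering property is precisely what is established in \cite{tancer2010d} (building on \cite{Weg75}), so citing it here is appropriate.
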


\subsection{Matroids}

A family $M$ of subsets of a non-empty set $V$ is a {\em matroid} if it satisfies
\begin{itemize}
    \item[(i)] $\emptyset \in M$, 
    \item[(ii)] for all $A' \subset A \subset V$, if $A \in M$ then $A' \in M$, and
    \item[(iii)] if $A, B \in M$ and $|A| < |B|$, then there exists $x \in B \setminus A$ such that $A \cup \{x\} \in M$.
\end{itemize}

The elements of $M$ are usually called the \emph{independent sets} of $M$, and the maximal elements are called the \emph{bases} of $M$.
The {\em rank function} of a matroid $M$ on $V$ is a function $\rho: 2^V \to \mathbb{N}$ such that for every $W \subset V$, $\rho(W)$ equals the maximal size of a set $W' \subset W$ with $W' \in M$. The \emph{rank of the matroid $M$} is defined as $\rho(V)$.
The \emph{span} of a set $U\subset V$
 is defined as
 \[
 \text{span}_M(U)= \{v\in V:\, \rho(U\cup\{v\})=\rho(U)\}.
 \]

Note that the conditions (i) and (ii) allow us to regard a matroid $M$ as a simplicial complex. Moreover, condition (iii), sometimes called the ``exchange property" for independent sets, implies that $M$ is a pure simplicial complex - that is, all the bases of $M$ are of the same size (equal to the rank of $M$). See e.g. \cite{oxley2006matroid} for more background on matroids.

We will need the following very simple auxiliary results about matroids.

\begin{lemma}\label{lemma:set_U}
Let $M$ be a rank $r$ matroid on vertex set $V$, and let $\rho$ be its rank function. Then, for each $0\leq t\leq |V|$, there exists $U\subset V$ such that $|U|=t$ and $\rho(U)\geq \min \{t,r\}$.
\end{lemma}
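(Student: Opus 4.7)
The plan is to split into two cases depending on whether $t \le r$ or $t > r$, using a basis of $M$ as the starting point in both cases. The main tool is the fact that, as a consequence of the exchange property (iii), every matroid has a basis, and all its bases have size equal to the rank $r$.

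First I would fix a basis $B$ of $M$, which has $|B| = r$. If $t \le r$, I would simply take $U$ to be any subset of $B$ of size $t$; such a subset exists since $|B| = r \ge t$, and it is independent because subsets of independent sets are independent (condition (ii)), so $\rho(U) = t = \min\{t,r\}$.

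If $t > r$, I would start from $B$ and extend it to a set $U$ of size $t$ by adjoining any $t - r$ additional vertices from $V \setminus B$. This is possible because $|V| \ge t$ by assumption, so $|V \setminus B| \ge t - r$. Then $B \subset U$ implies $\rho(U) \ge \rho(B) = r = \min\{t,r\}$, using monotonicity of $\rho$, which follows from condition (ii).

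There is no real obstacle here; the only thing to be careful about is ensuring $|V| \ge t$ so that a set of size $t$ even exists, but this is guaranteed by the hypothesis $t \le |V|$. The lemma then follows by combining the two cases.
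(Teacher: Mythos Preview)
Your proof is correct and follows essentially the same approach as the paper: for $t\leq r$ take an independent set of size $t$, and for $t\geq r$ take any set of size $t$ containing a basis. The paper's proof is just a slightly terser version of what you wrote.
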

\begin{proof}
If $t\leq r$ let $U$ be any independent set of $M$ of size $t$. If $t\geq r$ let $U\subset V$ be any set of size $t$ containing a basis of $M$.
\end{proof}

\begin{lemma}\label{lemma:matroid_rank_subset}
Let $M$ be a matroid  on vertex set $V$ with rank function $\rho$. Let $U\subset W\subset V$. Then
\[
    \rho(U)\geq \rho(W)-|W\setminus U|.
\]
\end{lemma}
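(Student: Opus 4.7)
The plan is to reduce the inequality to the elementary fact that adding a single element to a set increases its matroid rank by at most one, and then iterate. More precisely, I will first show that for any $A\subset V$ and any $x\in V$, we have $\rho(A\cup\{x\})\leq \rho(A)+1$. This follows directly from the exchange property (iii): if $I$ is a basis of $A\cup\{x\}$ and $J$ is a basis of $A$, then $|I|\leq |J|+1$, for otherwise we could repeatedly apply (iii) to augment $J$ into an independent set inside $A$ of size greater than $\rho(A)$, a contradiction.

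Next, I would enumerate the elements of $W\setminus U$ as $w_1,w_2,\ldots,w_k$, where $k=|W\setminus U|$, and set $A_0=U$ and $A_i=A_{i-1}\cup\{w_i\}$ for $i=1,\ldots,k$, so that $A_k=W$. Applying the inequality above at each step yields $\rho(A_i)\leq \rho(A_{i-1})+1$, and chaining these gives $\rho(W)=\rho(A_k)\leq \rho(U)+k=\rho(U)+|W\setminus U|$, which is exactly the claim after rearrangement.

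There is essentially no obstacle here: the lemma is a direct consequence of the exchange axiom. The only step that requires a (very short) argument is the single-element bound $\rho(A\cup\{x\})\leq \rho(A)+1$, and even that is standard matroid folklore. An alternative, slightly more structural proof would be to take a maximal independent set $I\subset U$ with $|I|=\rho(U)$, extend it to a maximal independent set $J\subset W$ with $|J|=\rho(W)$, and observe that any $x\in J\setminus I$ must lie in $W\setminus U$ (otherwise $I\cup\{x\}\subset U$ would be independent, contradicting maximality of $I$ in $U$), so $\rho(W)-\rho(U)=|J\setminus I|\leq |W\setminus U|$. Either approach works and is only a few lines.
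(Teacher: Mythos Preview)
Your proposal is correct. Your alternative argument (extend a basis $I$ of $U$ to a basis $J$ of $W$ and note that $J\setminus I\subset W\setminus U$) is essentially the mirror image of the paper's proof, which goes in the opposite direction: take an independent set $W'\subset W$ of size $\rho(W)$ and observe that $W'\cap U$ is an independent subset of $U$ of size at least $|W'|-|W\setminus U|=\rho(W)-|W\setminus U|$. Your main approach via iterating the single-element bound $\rho(A\cup\{x\})\leq\rho(A)+1$ also works, though your justification of that bound is slightly glossed: when you augment $J$ from $I$ via exchange, the element added could be $x\notin A$, so one more step is needed (or simply note that $I\setminus\{x\}\subset A$ is independent of size at least $|I|-1$). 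All three routes are standard and of comparable difficulty; the paper's is the most direct, avoiding induction entirely.
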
 
\begin{proof}
By definition of the rank function, $W$ contains an independent set $W'$ of size $\rho(W)$. Then, $W'\cap U$ is an independent set of size at least $|W'|-|W\setminus U|=\rho(W)-|W\setminus U|$. So $\rho(U)\geq \rho(W)-|W\setminus U|$.
\end{proof}

\section{Tolerance complexes of matroids}\label{sec:tolerance_matroids}

Let $M$ be a matroid with rank function $\rho$ on vertex set $V$. Let $r=\rho(V)$ be the rank of $M$. For $0\leq t\leq r$, let 
\[
    M^t= \{\sigma\subset V:\, \rho(\sigma)\geq |\sigma|-t\}.
\]
In particular, for $t=0$ we have $M^0=M$. Note that $M^t$ is exactly the ``$t$-tolerance complex" of $M$, as defined in \cite{kim2021leray}.

\begin{lemma}\label{lem:induced_matroid}
Let $0\leq t\leq r$. Then, $M^t$ is a matroid.
\end{lemma}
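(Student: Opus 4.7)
The plan is to verify directly that $M^t$ satisfies the three axioms (i), (ii), (iii) in the definition of a matroid. Axiom (i) is immediate: $\rho(\emptyset) = 0 \geq -t = |\emptyset| - t$, so $\emptyset \in M^t$. For axiom (ii), suppose $A \in M^t$ and $A' \subset A$. By Lemma \ref{lemma:matroid_rank_subset},
\[
    \rho(A') \geq \rho(A) - |A\setminus A'| \geq (|A|-t) - |A\setminus A'| = |A'| - t,
\]
so $A' \in M^t$.

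The heart of the argument is axiom (iii), the exchange property. Given $A,B \in M^t$ with $|A| < |B|$, I would split into two cases according to whether the constraint defining $M^t$ is tight at $A$. If $\rho(A) \geq |A| + 1 - t$, then for any $x \in B\setminus A$ (which is non-empty since $|B|>|A|$) we have $\rho(A\cup\{x\}) \geq \rho(A) \geq |A\cup\{x\}| - t$, so $A\cup\{x\} \in M^t$.

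The more delicate case is $\rho(A) = |A| - t$, where we must find an $x \in B\setminus A$ that strictly increases the $M$-rank of $A$, equivalently an $x \in B\setminus A$ with $x \notin \spanop_M(A)$. The key observation is that if no such $x$ existed, then $B \setminus A \subset \spanop_M(A)$, hence $B \subset A \cup \spanop_M(A) = \spanop_M(A)$, which would force
\[
    \rho(B) \leq \rho(\spanop_M(A)) = \rho(A) = |A| - t < |B| - t,
\]
contradicting $B \in M^t$. Therefore such an $x$ exists, and for this $x$ we have $\rho(A \cup \{x\}) = \rho(A) + 1 = |A \cup \{x\}| - t$, so $A \cup \{x\} \in M^t$, completing the verification. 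The only mild obstacle is the exchange step, but the span argument above reduces it to a one-line contradiction using the rank hypothesis on $B$.
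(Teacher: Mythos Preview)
Your proof is correct and follows essentially the same approach as the paper: both verify the axioms directly, use Lemma~\ref{lemma:matroid_rank_subset} for downward closure, and split the exchange property into the same two cases according to whether $\rho(A)=|A|-t$ or $\rho(A)>|A|-t$. The only cosmetic difference is that in the tight case the paper extracts maximal $M$-independent subsets of $A$ and $B$ and applies the exchange axiom of $M$ to them, whereas you phrase the same step via the closure $\spanop_M(A)$; both arguments are equivalent and equally short.
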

\begin{proof}
First, note that $M^t$ is a simplicial complex: if $\sigma\in M^t$ and $\sigma'\subset \sigma$, then, by Lemma \ref{lemma:matroid_rank_subset},
\[
    \rho(\sigma')\geq \rho(\sigma)-|\sigma\setminus\sigma'|
    \geq |\sigma|-|\sigma\setminus\sigma'|-t =|\sigma'|-t,
\]
and therefore $\sigma'\in M^t$. 

We will show that $M^t$ satisfies the exchange property for independent sets: Let $\sigma,\tau\in M^t$ such that $|\tau|>|\sigma|$. We have to show that there is a vertex $v\in \tau\setminus \sigma$ such that $\sigma\cup\{v\}\in M^t$. 

Since $\sigma\in M^t$, we have $\rho(\sigma)\geq |\sigma|-t$.
If $\rho(\sigma)>|\sigma|-t$, then for all $v\in V\setminus \sigma$ we have $\rho(\sigma\cup\{v\})\geq \rho(\sigma) \geq |\sigma\cup \{v\}|-t$, as wanted. If $\rho(\sigma)=|\sigma|-t$, then, since $\tau\in M^t$,
\[
    \rho(\tau)\geq |\tau|-t > |\sigma|-t =\rho(\sigma).
\]
Let $\tau'\subset \tau$ such that $|\tau'|=\rho(\tau)$ and $\tau'\in M$, and let $\sigma'\subset \sigma$ such that $|\sigma'|=\rho(\sigma)$ and $\sigma'\in M$. Then, since $|\tau'|>|\sigma'|$, there exists $v\in \tau'\setminus \sigma'$ such that $\sigma'\cup\{v\}\in M$.
Note that $v\in \tau\setminus \sigma$ (otherwise, $\sigma'\cup \{v\}\subset \sigma$, in contradiction to $\rho(\sigma)=|\sigma'|$).
We obtain
\[
\rho(\sigma\cup\{v\})\geq \rho(\sigma)+1 =|\sigma\cup\{v\}|-t,
\]
and therefore, $\sigma\cup\{v\}\in M^t$. Hence, $M^t$ is a matroid.
\end{proof}

\begin{lemma}
\label{lemma:induced_matroid_rank}
Let $0\leq t\leq r$. Denote the rank function of $M^t$ by $\rho^t$. Then, for every $A\subset V$,
\[
    \rho^t(A)= \min\{ |A|,\rho(A)+t\}.
\]
\end{lemma}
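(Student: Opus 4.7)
The plan is to prove the identity by establishing both inequalities in the claimed minimum formula, separately.

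First, I will show the upper bound $\rho^t(A) \le \min\{|A|, \rho(A)+t\}$. The inequality $\rho^t(A) \le |A|$ is immediate since $\rho^t(A)$ is the maximal size of a subset of $A$. For the inequality $\rho^t(A) \le \rho(A)+t$, I consider any $A' \subset A$ with $A' \in M^t$. By definition of $M^t$ we have $\rho(A') \ge |A'|-t$, and by monotonicity of $\rho$ (e.g.\ via Lemma~\ref{lemma:matroid_rank_subset} applied to $A' \subset A$) we have $\rho(A') \le \rho(A)$. Combining these yields $|A'| \le \rho(A)+t$, and taking the maximum over all such $A'$ gives the bound.

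Second, I will prove the reverse inequality by exhibiting an explicit subset $A' \subset A$ of size $\min\{|A|, \rho(A)+t\}$ that belongs to $M^t$. If $|A| \le \rho(A)+t$, then $A$ itself satisfies $\rho(A) \ge |A|-t$, so $A \in M^t$ and we can take $A' = A$. Otherwise $|A| > \rho(A)+t$, and in particular $|A \setminus B| = |A|-\rho(A) > t$ for any maximal $M$-independent subset $B \subset A$. Choose such a $B$ (so $|B| = \rho(A)$ and $B \in M$) and pick any $t$ vertices of $A \setminus B$ to form $A' = B \cup T$ with $|T|=t$. Then $|A'| = \rho(A)+t$ and $\rho(A') \ge \rho(B) = \rho(A) = |A'|-t$, so $A' \in M^t$, which gives $\rho^t(A) \ge \rho(A)+t = \min\{|A|,\rho(A)+t\}$.

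Both steps are elementary and I do not expect any serious obstacle. The only point where one has to be mildly careful is in the second case of the lower bound, where one must check that $A \setminus B$ is large enough to supply the $t$ extra vertices needed to augment the basis $B$; this is exactly where the assumption $|A| > \rho(A)+t$ is used. Once the two inequalities are combined, the identity $\rho^t(A) = \min\{|A|, \rho(A)+t\}$ follows.
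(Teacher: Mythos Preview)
Your proof is correct and follows essentially the same approach as the paper: both arguments split into the cases $|A|\le \rho(A)+t$ (where $A$ itself lies in $M^t$) and $|A|>\rho(A)+t$ (where one augments a maximal $M$-independent subset of $A$ by $t$ additional vertices), with the upper bound coming from $|A'|\le \rho(A')+t\le \rho(A)+t$ for any $A'\in M^t$. The only difference is organizational---you establish the upper bound once globally before the case split, whereas the paper folds it into the second case.
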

\begin{proof}

Assume first that $|A|\leq \rho(A)+t$.  Then $\rho(A) \geq |A|-t$, and therefore $A\in M^t$. In particular, $\rho^t(A)=|A|$.

Assume now that $|A|\geq \rho(A)+t$. Let $I\subset A$ such that $|I|=\rho(A)$ and $I\in M$. Then, any $U\subset A$ of size $\rho(A)+t$ containing $I$ satisfies
\[
\rho(U)\geq \rho(I)=\rho(A) = (\rho(A)+t)-t= |U|-t,
\]
and therefore belongs to $M^t$. Thus, $\rho^t(A)\geq \rho(A)+t$. On the other hand, every $U\subset A$ such that $U\in M^t$ satisfies $\rho(U)\geq |U|-t$, and therefore $|U|\leq \rho(U)+t\leq \rho(A)+t$. So $\rho^t(A)\leq \rho(A)+t$.
\end{proof}

\begin{remark}
    Given two matroids $M_1$ and $M_2$ on vertex set $V$, the \emph{matroid union} of $M_1$ and $M_2$ is defined as $M_1\vee M_2=\{\sigma\cup\tau:\, \sigma\in M_1,\, \tau\in M_2\}$. The matroid union theorem (see e.g. \cite[Sec. 11.3]{oxley2006matroid}) states that $M_1\vee M_2$ is a matroid, with rank function $\rho_{12}$ satisfying
    \[
        \rho_{12}(A)=\min\{\rho_1(B)+\rho_2(B)+|A\setminus B| :\, B\subset A\}
    \]
    for every $A\subset V$, where $\rho_1$ and $\rho_2$ are the rank functions of $M_1$ and $M_2$ respectively.
    
    Note that $M^t$ is the matroid union of $M$ and the uniform matroid $\{A\subset V:\, |A|\leq t\}$. 
    Therefore, Lemma \ref{lem:induced_matroid} and Lemma \ref{lemma:induced_matroid_rank} can also be obtained as a consequence of the matroid union theorem.
\end{remark}

\section{Proof of Theorem \ref{thm:very_colorful_helly_d_coll}}\label{sec:dcoll}

In this section we prove Theorem \ref{thm:very_colorful_helly_d_coll}. Our proof extends the argument for the case $m=1, k=d+1$ given by Kalai and Meshulam in \cite[Theorem 2.1] {kalai2005topological}.

\begingroup
\def\thetheorem{\ref{thm:very_colorful_helly_d_coll}}
\begin{theorem}
Let $d\geq 1$, $r\geq d+1$, $1\leq m\leq r$ and $m\leq k\leq \min\{m+d,r\}$ be integers. Let $V$ be a finite set with $|V|\geq \max\{m+d,r\}$.
Let $X$ be a $d$-collapsible simplicial complex on vertex set $V$, and let $M$ be a matroid of rank $r$ on vertex set $V$ with rank function $\rho$.

Assume that for every $U=\{u_1,\ldots,u_d,v_1,\ldots,v_m\}\subset V$ with $\rho(U)\geq k$, there is some $i\in[m]$ such that $\{u_1,\ldots,u_d,v_i\}\in X$. 
Then, there is some $\tau\in X$ such that $\rho(\tau)\geq r+1-m$ and $\rho(V\setminus \tau)\leq k-1$.
\end{theorem}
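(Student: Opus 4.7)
The plan is to induct on the number of simplices of $X$, adapting the Kalai--Meshulam argument for Theorem \ref{thm:kalai_meshulam_collapsible} (the special case $m=1$, $k=d+1$). First I dispose of the degenerate case: if $\dim X < d-1$, combining Lemma \ref{lemma:set_U} (which yields a $(d+m)$-subset $U\subset V$ with $\rho(U)\geq k$) with the hypothesis would force a $(d+1)$-simplex into $X$, a contradiction; hence $X$ admits an elementary $d$-collapse $X\to X'=\cost(X,\sigma)$ with $\sigma$ a free face of size exactly $d$ and unique maximal face $\tau$, by Lemma \ref{lemma:tancer}. Freeness of $\sigma$ forces $\tau=\sigma\cup N_X(\sigma)$, where $N_X(\sigma)=\{v\in V\setminus\sigma:\sigma\cup\{v\}\in X\}$.

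If the hypothesis continues to hold on $(X',M)$, the inductive hypothesis applied to $X'$ (which has strictly fewer simplices) produces a simplex $\tau'\in X'\subset X$ satisfying the conclusion, and the proof is done. Otherwise a witness $U=\{u_1,\ldots,u_d,v_1,\ldots,v_m\}$ with $\rho(U)\geq k$ exists such that no $\{u_1,\ldots,u_d,v_i\}$ lies in $X'$. Since the hypothesis holds for $X$, at least one such $(d+1)$-subset must lie in $X\setminus X'$, so it contains $\sigma$ and lies in $\tau$. In particular $\sigma\subset U$, so Lemma \ref{lemma:matroid_rank_subset} supplies the key inequality $\rho(\sigma)\geq \rho(U)-m\geq k-m$. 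My claim is that $\tau$ itself satisfies both conditions of the conclusion.

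To verify the claim, I apply the hypothesis with $\sigma$ playing the role of the $d$-subset: every $m$-subset $B\subset V\setminus\tau$ is disjoint from $N_X(\sigma)=\tau\setminus\sigma$, so the hypothesis forces $\rho(\sigma\cup B)\leq k-1$. In the contracted matroid $M/\sigma$ this reads $\rho_{M/\sigma}(B)\leq k-1-\rho(\sigma)\leq m-1$ (using $\rho(\sigma)\geq k-m$). A routine matroid-extension argument then upgrades this to $\rho_{M/\sigma}(V\setminus\tau)\leq k-1-\rho(\sigma)$ (the case $|V\setminus\tau|<m$ being handled trivially from $\rho(V\setminus\tau)\leq|V\setminus\tau|<m\leq k$). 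I conclude $\rho(V\setminus\tau)\leq \rho(\sigma)+\rho_{M/\sigma}(V\setminus\tau)\leq k-1$, and submodularity applied to $\tau$ and $\sigma\cup(V\setminus\tau)$ (union $V$, intersection $\sigma$) gives $\rho(\tau)\geq r-\rho_{M/\sigma}(V\setminus\tau)\geq r-(m-1)=r+1-m$, completing the two required bounds.

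The main obstacle lies in the hard case: the failure witness $U$ has to be used twice--first to establish the lower bound $\rho(\sigma)\geq k-m$, and then (via the hypothesis applied to $(d+m)$-subsets containing $\sigma$) to force both rank bounds on $\tau$ simultaneously through a single contraction by $\sigma$. The delicate point is that both the complement bound $\rho(V\setminus\tau)\leq k-1$ and the rank lower bound $\rho(\tau)\geq r+1-m$ emerge from the same matroid calculation in $M/\sigma$, which only works because the parameters $m$, $k$ and $\rho(\sigma)$ line up exactly in the range $k-m\leq \rho(\sigma)\leq d$ guaranteed by the hypotheses $m\leq k\leq m+d$.
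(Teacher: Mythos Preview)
Your proof is correct and follows essentially the same strategy as the paper's: descend along a $d$-collapsing sequence until the first step at which a $(d+1)$-element simplex of ``high enough'' rank is removed, identify the free face $\sigma$ and its unique maximal face $\tau$, observe that $\rho(\sigma)\geq k-m$, and then argue that $\tau$ satisfies both required rank bounds. The paper phrases the descent by tracking the family $\mathcal{R}=\{\eta\in X\cap M^{d+m-k}:|\eta|\geq d+1\}$ along a fixed collapsing sequence; your induction on $|X|$ with the dichotomy ``hypothesis survives / fails in $X'$'' is the same mechanism, and your bound $\rho(\sigma)\geq k-m$ is precisely the paper's observation that $\sigma\in M^{d+m-k}$.

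Where the two arguments diverge is the endgame. The paper introduces an auxiliary family
\[
\mathcal{A}=\{\eta\subset V\setminus\tau:\ \sigma\cup\eta\in M^{d+m-k}\},
\]
picks a maximal $\eta\in\mathcal{A}$, shows $|\eta|\leq m-1$, and then splits into two cases according to whether $\rho(\sigma\cup\eta)$ attains its lower bound. You instead work directly in the contraction $M/\sigma$: the hypothesis (with $\sigma$ as the fixed $d$-set) forces $\rho_{M/\sigma}(B)\leq k-1-\rho(\sigma)\leq m-1$ for every $m$-subset $B\subset V\setminus\tau$, which upgrades to the same bound on all of $V\setminus\tau$, and then a single submodularity inequality on the pair $\bigl(\tau,\ \sigma\cup(V\setminus\tau)\bigr)$ gives both conclusions simultaneously. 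This is a cleaner packaging that avoids the case analysis; the paper's maximal $\eta$ is in effect a witness for the rank of $V\setminus\tau$ in $M/\sigma$, so the two arguments are carrying out the same matroid computation in different language.

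Two minor expository points: you should note explicitly that $X'$ is again $d$-collapsible (it is the second term of a Tancer sequence for $X$), so that the induction hypothesis applies; and ``$(d+1)$-simplex'' should read ``simplex of size $d+1$''.
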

\addtocounter{theorem}{-1}
\endgroup
\begin{proof}[Proof of Theorem \ref{thm:very_colorful_helly_d_coll}]
Let
\begin{align*}
    \mathcal{R} &=\{\eta\in X:\, |\eta|\geq d+1,\, \rho(\eta)\geq k-m-d+|\eta|\} \\&= \{\eta\in X\cap M^{d+m-k} : \, |\eta|\geq d+1\}.
\end{align*}

By Lemma \ref{lemma:set_U}, there is some  $U=\{u_1,\ldots,u_d,v_1,\ldots,v_{m}\}\subset V$ with  $\rho(U)\geq \min\{m+d,r\}\geq k$. Then, there is some $i\in[m]$ such that $\sigma_i=\{u_1,\ldots,u_d,v_i\}\in X$. Note that $\sigma_i\subset U\in M^{d+m-k}$, so $\sigma_i\in \mathcal{R}$, and in particular $\mathcal{R}\neq \emptyset$.

By Lemma \ref{lemma:tancer}, there is a sequence of elementary $d$-collapses $X=X_1\to X_2\to\cdots\to X_t$, such that the free faces collapsed at each step are of size exactly $d$, and $X_t$ satisfies $\dim(X_t)< d-1$.

By definition, $\mathcal{R}\subset X=X_1$, and clearly $\mathcal{R}\not\subset X_t$. Let $1\leq n<t$ be the maximal index such that $\mathcal{R}\subset X_n$.
Let $\sigma$ be the free face of size $d$ in the elementary $d$-collapse $X_n\to X_{n+1}$, and let $\tau$ be the unique maximal face of $X_n$ containing $\sigma$.

Since $\mathcal{R}\subset X_n$ but $\mathcal{R}\not\subset X_{n+1}$,  there is some $\sigma\subset\sigma'\subset\tau$ such that $\sigma'\in M^{d+m-k}$. In particular, we have $\sigma\in M^{d+m-k}$. 
Let
\[
    \mathcal{A}= \{ \eta\subset V\setminus \sigma :\, \sigma\cup \eta \in M^{d+m-k}, \, \sigma\cup\{u\}\notin X_n \text{ for all } u\in \eta \}.
    \]
Note that, since $\sigma\in M^{d+m-k}$, we have $\emptyset\in\mathcal{A}$, and therefore $\mathcal{A}\neq\emptyset$. Let $\eta$ be a maximal element of $\mathcal{A}$. 

First, note that $|\eta|\leq m-1$. Otherwise, let  $v_1,\ldots,v_m\in \eta$, and let $U=\sigma\cup\{v_1,\ldots,v_m\}$.
Then $U \subset \sigma\cup \eta\in M^{d+m-k}$, and therefore
\[
    \rho(U)\geq |U|-(d+m-k)= (d+m)-(d+m-k)=k.
\]
Therefore, there is some $i\in[m]$ such that $\sigma_i=\sigma\cup\{v_i\}\in X$. Since $\sigma_i\subset \sigma\cup\eta\in M^{d+m-k}$, we also have $\sigma_i\in M^{d+m-k}$.
Thus, $\sigma_i\in \mathcal{R}$, and therefore 
$\sigma_i=\sigma\cup\{v_i\}\in X_n$, 
in contradiction to $\eta\in \mathcal{A}$.

Next, note that, for all $v\in V\setminus \text{span}_M(\sigma\cup\eta)$, we have $\sigma\cup\{v\}\in X_n$. Otherwise, we would have
\[
    \rho(\sigma\cup\eta\cup\{v\})=\rho(\sigma\cup\eta)+1
    \geq |\sigma\cup\eta|-(d+m-k)+1    
    = |\sigma\cup\eta\cup\{v\}|-(d+m-k).
\]
So $\sigma\cup\eta\cup\{v\}\in M^{d+m-k}$,
and therefore $\eta\cup\{v\}\in\mathcal{A}$, in contradiction the the maximality of $\eta$. Therefore,
since $\sigma$ is contained in the unique maximal face $\tau$ of $X_n$, we obtain
   \begin{equation}\label{eq:tau1}
        V\setminus \text{span}_M(\sigma\cup\eta) \subset \tau,
    \end{equation}
    or equivalently,
    \[
        V\setminus \tau \subset \text{span}_M(\sigma\cup\eta).
    \]
  We divide into two cases: If $\rho(\sigma\cup\eta)=|\sigma\cup\eta|-(d+m-k)= k-m+|\eta|$, then
  \[
    \rho(V\setminus \tau)\leq \rho(\sigma\cup\eta) = k-m+|\eta|\leq k-1,
  \]
  as wanted.
  
  If $\rho(\sigma\cup\eta)\geq  |\sigma\cup\eta|-(d+m-k)+1$, then for every $v\in V\setminus(\sigma\cup\eta)$, we have
  \[
  \rho(\sigma\cup\eta\cup\{v\})\geq \rho(\sigma\cup\eta) \geq  |\sigma\cup\eta\cup\{v\}| -(d+m-k).
  \]
  So $\sigma\cup\eta\cup\{v\}\in M^{d+m-k}$, and therefore, by the maximality of $\eta$ in $\mathcal{A}$, we must have $\sigma\cup\{v\}\in X_n$. 
  Hence, since $\sigma$ is contained in the unique maximal face $\tau$ of $X_n$, we have $V\setminus\eta \subset \tau$ (actually, $V\setminus\eta= \tau$, since $\sigma\cup\{u\}\notin X_n$ for all $u\in \eta$ by the definition of $\mathcal{A}$). Thus,
  \[
    \rho(V\setminus \tau)= \rho(\eta) \leq |\eta|\leq m-1 \leq k-1.
  \]

  Finally, let $v\in V$. If $v\in \text{span}_M(\sigma\cup \eta)$ then, since $\sigma\subset \tau$, we have $v\in \text{span}_M(\tau\cup \eta)$. If $v\notin \text{span}_M(\sigma\cup \eta)$, then by \eqref{eq:tau1}, $v\in\tau\subset \text{span}_M(\tau\cup\eta)$. Thus, $\text{spam}_M(\tau\cup\eta)=V$, and therefore $\rho(\tau\cup\eta)=\rho(V)=r$. Hence, by Lemma \ref{lemma:matroid_rank_subset}, we obtain
  \[
    \rho(\tau)\geq r-|\eta|\geq r-m+1.
  \]
\end{proof}

\section{Proof of Theorem \ref{thm:very_colorful_helly_d_leray}}\label{sec:dleray}

First, we prove the $m=1$ case of Theorem \ref{thm:very_colorful_helly_d_leray}:

\begin{theorem}\label{thm:very_colorful_helly_d_leray_m==1}
Let $X$ be a $d$-Leray simplicial complex on vertex set $V$. Let $M$ be a matroid on $V$ of rank $r\geq d+1$. Denote by $\rho$ the rank function of $M$. Let $1\leq k\leq d+1$.

Assume that $X$ contains all the sets $\sigma\subset V$ of size $d+1$ such that $\rho(\sigma)\geq k$.
Then, there is some $\tau\in X$ such that $\rho(V\setminus \tau)\leq k-1$.
\end{theorem}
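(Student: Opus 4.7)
I will deduce Theorem~\ref{thm:very_colorful_helly_d_leray_m==1} from the original Kalai--Meshulam Theorem~\ref{thm:kalai_meshulam_leray} by applying it to a carefully chosen rank-$(d+1)$ matroid sitting inside $X$, constructed from $M$ via the tolerance matroid machinery of Section~\ref{sec:tolerance_matroids}.

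Set $t:=d+1-k$, so $0\le t\le d$, and form the tolerance matroid $M^{t}$ (Lemma~\ref{lem:induced_matroid}). By Lemma~\ref{lemma:induced_matroid_rank} its rank function is $\rho^{t}(A)=\min\{|A|,\rho(A)+t\}$, and its total rank is $\min\{|V|,r+t\}\ge d+1$. Let $N$ be the truncation of $M^{t}$ at rank $d+1$, that is, $N:=\{\sigma\in M^{t}:|\sigma|\le d+1\}$. Standard matroid theory (truncation of a matroid is a matroid) gives that $N$ is a matroid of rank $d+1$ whose rank function is $\rho_{N}(A)=\min\{d+1,|A|,\rho(A)+t\}$, and whose bases are exactly the subsets of $V$ of size $d+1$ having $M$-rank at least $k$.

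The next step is to verify $N\subseteq X$. Any $\sigma\in N$ of size $d+1$ satisfies $\rho(\sigma)\ge (d+1)-t=k$ by the definition of $M^{t}$, and therefore lies in $X$ by the hypothesis of the theorem. Any smaller face of $N$ can be extended, via the matroid exchange property, to a basis of $N$ of size $d+1$, and is thus contained in a member of $X$, hence in $X$ itself. With $N\subseteq X$ established, I apply Theorem~\ref{thm:kalai_meshulam_leray} to the $d$-Leray complex $X$ and the rank-$(d+1)$ matroid $N$, obtaining a face $\tau\in X$ with $\rho_{N}(V\setminus\tau)\le d$. Unpacking the formula for $\rho_{N}$, this is equivalent to saying that $V\setminus\tau$ contains no subset of size $d+1$ whose $M$-rank is at least $k$.

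It remains to deduce $\rho(V\setminus\tau)\le k-1$. In the generic case $|V\setminus\tau|\ge d+1$ this is immediate: if we had $\rho(V\setminus\tau)\ge k$, then any independent $k$-subset of $V\setminus\tau$ could be completed within $V\setminus\tau$ to a size-$(d+1)$ subset of $M$-rank at least $k$, contradicting the previous paragraph. The main obstacle is the degenerate case $|V\setminus\tau|\le d$, in which the completion argument is unavailable and $\rho(V\setminus\tau)$ could a priori be as large as $d$. I plan to handle this by first enlarging $\tau$ to a facet of $X$ (note that $\rho_{N}(V\setminus\tau)\le d$ is preserved under such enlargement since $V\setminus\tau$ only shrinks), and then exploiting facet-maximality together with the hypothesis that every size-$(d+1)$ set of $M$-rank $\ge k$ lies in $X$ to produce a simplex of $X$ strictly extending $\tau$, contradicting maximality. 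The argument here is the analogue, in the $d$-Leray setting, of the maximality/extension argument used to close the proof of Theorem~\ref{thm:very_colorful_helly_d_coll}, and is the part of the proof I expect to require the most care.
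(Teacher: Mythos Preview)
Your reduction to Kalai--Meshulam via the tolerance matroid $M^{t}$ with $t=d+1-k$ is exactly the paper's strategy, and the generic case $|V\setminus\tau|\ge d+1$ goes through as you describe. The problem is the degenerate case $|V\setminus\tau|\le d$: the facet-maximality argument you sketch does not work. To extend a facet $\tau$ by some $v\in V\setminus\tau$ you would need (via Lemma~\ref{lemma:helly_d_leray}) every $(d+1)$-subset $\{v\}\cup S'$ with $S'\subset\tau$, $|S'|=d$, to lie in $X$; but the hypothesis only forces this when $\rho(\{v\}\cup S')\ge k$, and nothing prevents $S'$ from lying inside $\text{span}_M(v)$ so that $\rho(\{v\}\cup S')$ is small. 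The analogy with the proof of Theorem~\ref{thm:very_colorful_helly_d_coll} is misleading: there the key fact is that the free face $\sigma$ sits in a \emph{unique} maximal face, so ``$\sigma\cup\{v\}\in X_n$'' forces $v$ into $\tau$; facets of a $d$-Leray complex have no such uniqueness property, and a $(d+1)$-simplex $W'\in X$ containing $V\setminus\tau$ need not interact with $\tau$ at all.

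The paper closes this gap by appealing not to Theorem~\ref{thm:kalai_meshulam_leray} but to the finer Lemma~\ref{lemma:kalai_meshulam}, which replaces the global bound $d$ by the local quantity $\ell_\sigma$. Working by contradiction (assuming $\rho(V\setminus\sigma)\ge k$ for every $\sigma\in X$), one gets $\dim(\lk(X,\sigma))\le |V\setminus\sigma|-k-1$, hence $\ell_\sigma\le |V\setminus\sigma|-k$, and then $\rho^{t}(V\setminus\sigma)=|V\setminus\sigma|\ge \ell_\sigma+1$ in the degenerate regime. This local improvement is precisely what your use of the truncated matroid $N$ and the ``black-box'' Theorem~\ref{thm:kalai_meshulam_leray} cannot see. (Incidentally, the truncation is unnecessary: one checks via Lemma~\ref{lemma:matroid_rank_subset} and Lemma~\ref{lemma:helly_d_leray} that already $M^{t}\subset X$, but applying Theorem~\ref{thm:kalai_meshulam_leray} to $M^{t}$ itself runs into the same degenerate case.)
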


The main tool we will need for the proof is the following result by Kalai and Meshulam (implicit in the proof of \cite[Thm 1.6]{kalai2005topological}; see also \cite[Theorem 3]{aharoni2022cooperative} for a similar statement and \cite[Theorem 4.5]{aharoni2006intersection} for an equivalent dual version).

\begin{lemma}[Kalai-Meshulam {\cite{kalai2005topological}}]
\label{lemma:kalai_meshulam}
Let $X$ be a simplicial complex and $M$ a matroid with rank function $\rho$ on the same vertex set $V$. For each $\sigma\in X$, let \[
\ell_{\sigma}=\min\{k\geq -1:\, \tilde{H}_i(\lk(X,\sigma))=0\,\, \forall i\geq k\}.
\]
If for all $\sigma\in X$,
\[
   \rho(V\setminus \sigma)\geq \ell_{\sigma}+1,
\]
then $M\not\subset X$.
\end{lemma}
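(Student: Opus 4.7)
The plan is to prove the contrapositive by induction on $|V|$: assuming $M\subset X$, I will exhibit some $\sigma\in X$ for which $\rho(V\setminus\sigma)\leq \ell_\sigma$, contradicting the hypothesis of the lemma.

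The base case is $V=\emptyset$. Here $M=\{\emptyset\}$ forces $X$ to contain the empty face; since there are no vertices, $X=\{\emptyset\}$ and $\lk(X,\emptyset)=\{\emptyset\}$ has $\tilde H_{-1}\neq 0$, giving $\ell_\emptyset\geq 0 = \rho(V)$ as required.

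For the inductive step, I would pick a vertex $v\in V$ and exploit the Mayer--Vietoris sequence of Theorem \ref{thm:exact_sequence_link_costar}:
\[
\cdots\to \tilde H_k(\lk(X,v))\to \tilde H_k(X\setminus v)\to \tilde H_k(X)\to \tilde H_{k-1}(\lk(X,v))\to \cdots
\]
together with a case split on whether $v$ is a loop of $M$. If $v$ is a loop, then $M\subset X\setminus v$ and $M$ behaves as a rank-$r$ matroid on $V\setminus\{v\}$, so induction applied to $(X\setminus v, M)$ furnishes a simplex $\tau\in X\setminus v$ satisfying the required bound in the smaller complex; the naive lift is $\sigma=\tau$, and one must compare $\ell_\tau^{X\setminus v}$ with $\ell_\tau^X$ using the exact sequence above applied to $\lk(X,\tau)$ at $v$. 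If $v$ is not a loop, then the contraction $M/v$ is a rank-$(r-1)$ matroid on $V\setminus\{v\}$ contained in $\lk(X,v)$, and induction applied to $(\lk(X,v), M/v)$ yields $\tau\in \lk(X,v)$; the natural lift is $\sigma=\tau\cup\{v\}$, for which $\lk(X,\sigma)=\lk(\lk(X,v),\tau)$ and hence $\ell_\sigma^X=\ell_\tau^{\lk(X,v)}$.

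The main obstacle is the off-by-one accounting that arises in lifting. In the contraction case the naive bound gives $\rho_M(V\setminus\sigma)\leq \rho_{M/v}((V\setminus\{v\})\setminus\tau)+1\leq \ell_\sigma^X+1$, one weaker than needed. The plan is to close this gap via exactness: if no $\sigma\in X$ violated the conclusion, then each inductive witness $\tau$ would yield a non-zero homology class propagated through the Mayer--Vietoris sequence, and exactness would eventually force a class in $\tilde H_{r-1}(X)$, giving $\ell_\emptyset\geq r=\rho(V)$ and the witness $\sigma=\emptyset$. A symmetric argument handles the deletion case. Orchestrating this bookkeeping carefully---perhaps by inducting on $r+|V|$ so that each subcase strictly decreases the induction parameter, and choosing $v$ to lie in a basis of $M$ to remain in the contraction case---is the core technical work.
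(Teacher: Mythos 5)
The paper does not actually prove this lemma --- it is quoted from Kalai--Meshulam, where it is implicit in their proof of the topological colorful Helly theorem --- so your proposal has to stand on its own, and as written it does not: the decisive step is exactly the one you flag and then leave open. In your contraction case (and note that you send $v$ to the deletion case only when $v$ is a loop, so for a loopless matroid --- e.g.\ the partition matroids to which the lemma is applied in this paper --- \emph{every} vertex lands in the contraction case), the inductive witness $\tau\in\lk(X,v)$ only yields $\rho_M\bigl(V\setminus(\tau\cup\{v\})\bigr)\leq \rho_{M/v}\bigl((V\setminus\{v\})\setminus\tau\bigr)+1\leq \ell_{\tau\cup\{v\}}+1$, one more than required, and nothing in the proposal closes this gap. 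The proposed remedy --- that ``each inductive witness would yield a non-zero homology class propagated through the Mayer--Vietoris sequence, eventually forcing a class in $\tilde{H}_{r-1}(X)$'' --- is not an argument: the exact sequence of Theorem \ref{thm:exact_sequence_link_costar} at the chosen vertex $v$ relates only $\lk(X,v)$, $X\setminus v$ and $X$; it provides no mechanism for transporting non-vanishing homology of $\lk(X,\sigma)$ for the various witnesses $\sigma$ arising at deeper levels of the induction into a class of $\tilde{H}_{r-1}(X)$, nor any reason why failure of all other candidate witnesses should force $\ell_{\emptyset}\geq r$. Choosing $v$ inside a basis, or inducting on $r+|V|$, does not touch this issue.

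For what it is worth, your loop/deletion case can be completed along the lines you indicate: if $\tilde{H}_i(\lk(X,\tau)\setminus v)\neq 0$ for some $i\geq \rho(V\setminus\{v\}\setminus\tau)-1$, then exactness forces $\tilde{H}_i\neq 0$ for either $\lk(X,\tau)$ or $\lk(X,\tau\cup\{v\})$, and since $v$ is a loop the complements of $\tau$ and $\tau\cup\{v\}$ have the same rank, so one of the two is a valid witness. But the heart of the lemma is the non-loop case, and the known proofs do not obtain it by the naive link-only induction you set up: they either strengthen the statement being inducted upon (as in the dual version cited alongside the lemma, which is phrased in terms of the homological connectivity of \emph{all} induced subcomplexes $X[W]$ measured against ranks of complements) or exploit the fact that every restriction $M[W]$ of a matroid is homologically $(\rho(W)-2)$-connected --- precisely the extra leverage needed to absorb the off-by-one you encounter. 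Without some such strengthening, the induction as formulated does not close, so the proposal has a genuine gap at its central step.
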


We will also need the following well known simple result. 

\begin{lemma}[Helly's Theorem for $d$-Leray complexes]
\label{lemma:helly_d_leray}
Let $X$ be a $d$-Leray complex on vertex set $V$, and let $U\subset V$ be a set of size at least $d+1$. Assume that every subset of $U$ of size $d+1$ is a simplex of $X$. Then, $U\in X$.
\end{lemma}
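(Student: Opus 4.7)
The plan is to prove the statement by induction on $n=|U|$. The base case $n=d+1$ is immediate: $U$ is itself a $(d+1)$-subset of $U$, so $U\in X$ by hypothesis.

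For the inductive step, suppose $n\geq d+2$ and that the claim holds for all strictly smaller sets. Consider any $(n-1)$-subset $U'\subsetneq U$. Every $(d+1)$-subset of $U'$ is in particular a $(d+1)$-subset of $U$, hence a simplex of $X$; since $|U'|=n-1\geq d+1$, the induction hypothesis applies and gives $U'\in X$. By downward closure of $X$, every proper subset of $U$ is then a simplex of $X$, so the induced subcomplex $X[U]$ contains the full boundary $\partial \Delta_U$ of the abstract simplex on $U$.

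Now suppose for contradiction that $U\notin X$. Then $X[U]$ is exactly $\partial \Delta_U$, which is homeomorphic to the sphere $S^{n-2}$ and therefore satisfies $\tilde{H}_{n-2}(X[U])\cong \mathbb{Z}$. Because $n\geq d+2$, we have $n-2\geq d$, and this is a nontrivial homology group of an induced subcomplex of $X$ in dimension $\geq d$, contradicting the $d$-Lerayness of $X$. Hence $U\in X$, completing the induction.

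There is no real obstacle here: the induction carries itself once one observes that the $(d+1)$-subset hypothesis is preserved when passing to $(n-1)$-subsets, and the contradiction with $d$-Lerayness comes for free from the well-known computation $\tilde{H}_{n-2}(\partial \Delta_U)\cong \mathbb{Z}$.
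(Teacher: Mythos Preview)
Your proof is correct and follows essentially the same idea as the paper's: both reduce to a situation in which some set of size at least $d+2$ has all proper subsets in $X$ but is itself not in $X$, so that its induced subcomplex is the boundary of a simplex with nontrivial homology in dimension $\geq d$, contradicting $d$-Lerayness. The only cosmetic difference is that the paper picks an inclusion-minimal non-face $S\subset U$ directly, whereas you arrive at the same configuration by induction on $|U|$.
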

\begin{proof}
Assume for contradiction that $U\notin X$. Let $S\subset U$ be an inclusion minimal set such that $S\notin X$. By the assumption of the lemma, we must have $|S|\geq d+2$. But then, by the minimality of $S$, $X[S]$ is just the boundary of a $(|S|-1)$-dimensional simplex, which has non-trivial homology in dimension $|S|-2 \geq d$. This is a contradiction to the assumption that $X$ is $d$-Leray.
\end{proof}

\begin{proof}[Proof of Theorem \ref{thm:very_colorful_helly_d_leray_m==1}]
Assume for contradiction that for all $\sigma\in X$, $\rho(V\setminus\sigma)\geq k$.
In particular, we have $|V\setminus \sigma|\geq k$ for all $\sigma\in X$.

Recall that we defined, for $\sigma\in X$,
\[
\ell_{\sigma}=\min\{n\geq -1:\, \tilde{H}_i(\lk(X,\sigma))=0\, \forall i\geq n\}.
\]

Let $t=d+1-k$. In order to apply Lemma \ref{lemma:kalai_meshulam}, we will show that $\rho^t(V\setminus\sigma)\geq \ell_{\sigma}+1$ for all $\sigma\in X$.

Let $\sigma\in X$. By Lemma \ref{lemma:induced_matroid_rank}, we have 
\[
\rho^t(V\setminus\sigma)\geq
\min\{|V\setminus\sigma|,t+k\}=
\min\{|V\setminus\sigma|,d+1\}.
\]
We divide into two cases:

First, assume $|V\setminus\sigma|\geq d+1$. Then, since $X$ is $d$-Leray, we have, by Lemma \ref{lemma:km_dleray}, $\ell_{\sigma}\leq d$,  and therefore
\[
  \rho^t(V\setminus\sigma)\geq \min\{|V\setminus\sigma|,d+1\}= d+1 \geq \ell_{\sigma}+1.
\]

Now, assume $|V\setminus\sigma|\leq d$. Then, we have $\rho^t(V\setminus\sigma)=|V\setminus\sigma|$.
Since $|V\setminus\sigma'|\geq k$ for all $\sigma'\in X$, we must have
$\dim(X)\leq |V|-k-1$, and therefore
\[
\dim(\lk(X,\sigma))\leq |V\setminus\sigma|-k-1.
\]
In particular, $\ell_{\sigma}\leq |V\setminus\sigma|-k$.
We obtain
\[
    \rho^t(V\setminus\sigma)=|V\setminus\sigma|\geq \ell_{\sigma}+k\geq \ell_{\sigma}+1.
\]
Hence, by Lemma \ref{lemma:kalai_meshulam}, we have $M^t\not\subset X$. That is, there is some $\tau\in M^t$ such that $\tau\notin X$. If $|\tau|\geq d+1$  then, by Lemma \ref{lemma:helly_d_leray}, there is some $\tau'\subset \tau$ of size $d+1$ such that $\tau'\notin X$. 
If $|\tau|<d+1$ then, since $M^t$ is a matroid of rank $r\geq d+1>|\tau|$, there is some $\tau'\supset \tau$ of size $d+1$ such that $\tau'\in M^t$, and, since $\tau\notin X$, we also have $\tau'\notin X$.

In both cases, since $\tau'\in M^t$, we have
\[
    \rho(\tau')\geq |\tau'|-t= d+1-t= k,
\]
but this is a contradiction to the assumption of the theorem.
\end{proof}

For the proof of Theorem \ref{thm:very_colorful_helly_d_leray}, will need the following Lemma about $d$-Leray complexes.

\begin{lemma}\label{lem:U_is_a_simplex}
    Let $X$ be a $d$-Leray complex on vertex set $V$, and let $A\subset d+1$ such that $A\notin X$. Let
    \[
    U=\{ v\in V\setminus A:\, v\cup \sigma\in X:\, \text{ for all } \sigma\subset A, |\sigma|=d\}.
    \]
    Assume that $U\neq \emptyset$.
    Then,
    for every $a\in A$, $U\cup A\setminus \{a\}\in X$.
\end{lemma}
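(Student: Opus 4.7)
The plan is to prove, by induction on $|\sigma|$, the stronger statement that $\sigma \cup \tau \in X$ for every $\sigma \subset U$ and every $\tau \subsetneq A$; the lemma is the specialization $\sigma = U$, $\tau = A \setminus \{a\}$. The base cases $|\sigma| \leq 1$ are immediate: fix any $v \in U$, and for $\tau \subsetneq A$ pick $a \in A \setminus \tau$; then $\tau \subset A \setminus \{a\} \subset \{v\} \cup (A \setminus \{a\}) \in X$, and the same inclusion handles $\sigma = \{v\}$.

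For the inductive step, fix $\sigma \subset U$ with $|\sigma| = s \geq 2$, consider the induced subcomplex $Y = X[\sigma \cup A]$ (which is $d$-Leray), and define $T = \{\tau \subset A : \sigma \cup \tau \in X\}$. Then $T$ is a simplicial complex on $A$; since $A \notin X$ we have $A \notin T$, so $T \subset \partial A$, where I write $\partial A := 2^A \setminus \{A\}$. The goal is to prove $T = \partial A$, which immediately yields $\sigma \cup \tau \in X$ for every $\tau \subsetneq A$ and closes the induction.

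I would decompose $Y = X_1 \cup X_2$, where $X_1 = 2^\sigma \ast T$ (the closed star of $\sigma$ in $Y$) and $X_2 = (\partial \sigma) \ast (\partial A)$, with $\partial \sigma := 2^\sigma \setminus \{\sigma\}$. Here the inductive hypothesis is exactly what gives the equality $X_2 = \cost(Y, \sigma)$: it says $\sigma' \cup \tau' \in X$ whenever $\sigma' \subsetneq \sigma$ and $\tau' \subsetneq A$. The intersection is $X_1 \cap X_2 = (\partial \sigma) \ast T$. Now $X_1$ is contractible (as the join with the contractible simplex $2^\sigma$); $X_2 \cong S^{s-2} \ast S^{d-1} \cong S^{s+d-2}$ gives $\tilde{H}_{s+d-2}(X_2) \cong \mathbb{Z}$; and $\tilde{H}_{s+d-2}(Y) = 0$ since $Y$ is $d$-Leray and $s+d-2 \geq d$. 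Combined with the join identification $\tilde{H}_n(S^{s-2} \ast T) \cong \tilde{H}_{n-s+1}(T)$, the Mayer--Vietoris sequence at degree $n = s + d - 2$ produces a surjection $\tilde{H}_{d-1}(T) \twoheadrightarrow \mathbb{Z}$; in particular, $\tilde{H}_{d-1}(T) \neq 0$.

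To finish, I would argue that every proper subcomplex $T \subsetneq \partial A$ satisfies $\tilde{H}_{d-1}(T) = 0$. Indeed, $\tilde{H}_{d-1}(\partial A) \cong \mathbb{Z}$ is generated by the fundamental cycle $z = \sum_{a \in A} \pm (A \setminus \{a\})$, which has non-zero coefficient on every facet; if some facet $A \setminus \{a_0\}$ is missing from $T$, then no non-zero scalar multiple of $z$ lies in $C_{d-1}(T)$, so $Z_{d-1}(T) = 0$ and hence $\tilde{H}_{d-1}(T) = 0$. This contradicts the surjection above, forcing $T = \partial A$. The main step is recognizing the decomposition $Y = (2^\sigma \ast T) \cup (\partial \sigma \ast \partial A)$, which converts the $d$-Leray hypothesis into a structural constraint on $T$ through a single Mayer--Vietoris computation.
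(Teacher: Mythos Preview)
Your proof is correct and follows the same overall strategy as the paper's: induction on the size of a subset of $U$, using the $d$-Leray hypothesis to force the $(d-1)$-homology of a subcomplex of $\partial A$ to be nontrivial, hence equal to all of $\partial A$. The organizational difference is that the paper, in the inductive step for $W=\{u_1,\ldots,u_{i+1}\}$, first passes to the link $L=\lk(X[A\cup W],\{u_1,\ldots,u_{i-1}\})$ and then applies the one-vertex link--costar exact sequence at $u_i$ to conclude $\tilde{H}_{d-1}(\lk(L,u_i))=0$; it then argues directly at the chain level that the fundamental cycle of $\partial 2^A$ must bound, which forces each $\sigma\cup\{u_{i+1}\}$ to be present. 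You instead take the whole induced complex $Y=X[\sigma\cup A]$ and apply a single Mayer--Vietoris step for the decomposition into the closed star $2^{\sigma}\ast T$ and the costar $\partial\sigma\ast\partial A$, invoking the join/suspension isomorphism $\tilde{H}_n(S^{s-2}\ast T)\cong\tilde{H}_{n-s+1}(T)$ to identify the relevant group as $\tilde{H}_{d-1}(T)$. Your packaging is arguably cleaner---it treats all of $\sigma$ at once and avoids the explicit chain argument---at the cost of using the general Mayer--Vietoris sequence and the suspension formula rather than only the elementary link--costar sequence the paper relies on.
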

\begin{proof}
Given a set $B\subset V$ and $i\geq 0$, we denote by $\binom{B}{i}$ the family of all subsets of $B$ of size $i$, and we denote by $\partial 2^B$ the boundary of the simplex on vertex set $B$, i.e. the simplicial complex consisting of all proper subsets of $B$.

First note that, since $U\neq\emptyset$, we must have $\sigma\in X$ for every $\sigma\subsetneq A$. That is, $\partial 2^A\subset X$.
We will show that, for every $1\leq i\leq |U|$ and $W \in \binom{U}{i}$, $X[A\cup W] =  \partial 2^A\ast 2^{W}$.
We argue by induction on $i$. The base case $i=1$ is obvious by the definition of $U$.
Suppose, for some $1\leq i<|U|$, that $X[A\cup T] =\partial 2^A\ast  2^{T}$ for every $T \in \binom{U}{i}$.
Let $W = \{u_1,u_2,\ldots,u_{i+1}\} \subset U$.

Consider \[L = \lk(X[A\cup W], \{u_1,u_2,\ldots,u_{i-1}\}).\]
Note that, by Lemma \ref{lemma:km_dleray}, $\tilde{H}_j(L) = 0$ for every $j \geq d$. Furthermore,
by the induction hypothesis, we have $X[A\cup (W\setminus \{u_{i}\})]=\partial 2^A \ast 2^{W\setminus \{u_{i}\}}$, and therefore $L \setminus u_{i} = \partial 2^A\ast 2^{\{u_{i+1}\}}$. In particular, $L\setminus u_i$ contractible, and thus $\tilde{H}_j(L\setminus u_i)=0$ for all $j\geq 0$.
By Theorem \ref{thm:exact_sequence_link_costar}, there exists a long exact sequence
\[
    \cdots \to \tilde{H}_j(L)\to \tilde{H}_{j-1}(\lk(L,u_i))\to \tilde{H}_{j-1}(L\setminus u_i)\to\cdots
\]
We obtain that \[\tilde{H}_j(\lk(L, u_i)) = 0\;\;\text{for all}\;\;j \geq d-1.\]
Since $\partial 2^A \subset \lk(L, u_i)=\lk(X[A\cup W], \{u_1,u_2,\ldots,u_{i}\})$, there must be a $d$-dimensional chain in $\lk(L, u_i)$ whose boundary equals to $\partial 2^A$.
For each $(d-1)$-dimensional simplex $\sigma$ in $\partial 2^A$, the only possible $d$-dimensional simplex of $\lk(L, u_i)$ containing $\sigma$ is $\sigma\cup \{u_{i+1}\}$. This shows $\partial 2^A \ast 2^{\{u_{i+1}\}}\subset  \lk(L,u_i)$. Since $A\notin \lk(L,u_i)$, we must actually have
$\lk(L, u_i) = \partial 2^A\ast 2^{\{u_{i+1}\}} $, and hence we have $X[A\cup W] = \partial 2^A \ast 2^W$.

Finally, letting $i=|U|$, we obtain that $\partial 2^A\ast 2^U \subset X$, as desired.
\end{proof}

\begingroup
\def\thetheorem{\ref{thm:very_colorful_helly_d_leray}}
\begin{theorem}
Let $d\geq 1$, $r\geq d+1$, $1\leq m\leq r$ and $m\leq k\leq \min\{m+d,r\}$ be integers. Let $V$ be a finite set with $|V|\geq \max\{m+d,r\}$.
    Let $X$ be a $d$-Leray simplicial complex on vertex set $V$, and let $M$ be a matroid of rank $r$ on vertex set $V$ with rank function $\rho$.
    
     Assume that for every $U=\{u_1,\ldots,u_{d+1},v_1,\ldots,v_{m-1}\}\subset V$ with $\rho(U)\geq k$, either $\{u_1,\ldots,u_{d+1}\}\in X$ or there exists $j\in[m-1]$ such that $\sigma\cup\{v_j\}\in X$ for every $\sigma\subsetneq \{u_1,\ldots,u_{d+1}\}$.
    Then, there is some $\tau \in X$ such that $\rho(V\setminus\tau) \leq k-1$.
\end{theorem}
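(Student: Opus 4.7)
My plan is to perform a case split according to whether every $(d+1)$-subset of $V$ of rank at least $k-m+1$ lies in $X$.

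In the first branch, assume every $A \subset V$ with $|A| = d+1$ and $\rho(A) \geq k-m+1$ is in $X$. The hypotheses $m \leq k \leq d+m$ guarantee $1 \leq k-m+1 \leq d+1$, so I would apply Theorem \ref{thm:very_colorful_helly_d_leray_m==1} with parameter $k' = k-m+1$: it directly produces $\tau \in X$ with $\rho(V \setminus \tau) \leq k'-1 = k-m \leq k-1$, as required.

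In the second branch, fix some $A_0 \subset V$ with $|A_0| = d+1$, $\rho(A_0) \geq k-m+1$, and $A_0 \notin X$. Set
\[
U_0 = \{v \in V \setminus A_0 : \{v\} \cup \sigma \in X \text{ for every } \sigma \subset A_0 \text{ with } |\sigma|=d\}
\]
and $V' = V \setminus (A_0 \cup U_0)$. First I would show $U_0 \neq \emptyset$: since $r \geq k$ and $\rho(A_0) \geq k-m+1$, a matroid exchange produces some $B \subset V \setminus A_0$ of size $m-1$ with $\rho(A_0 \cup B) \geq k$ (the hypothesis $|V| \geq m+d$ ensures $|V \setminus A_0| \geq m-1$); then applying the hypothesis to $A_0 \cup B$, combined with $A_0 \notin X$, yields some $v_j \in B$ with $\sigma \cup \{v_j\} \in X$ for every $\sigma \subsetneq A_0$, i.e., $v_j \in U_0$. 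Lemma \ref{lem:U_is_a_simplex} then provides $\tau^* := (A_0 \setminus \{a\}) \cup U_0 \in X$ for any choice of $a \in A_0$.

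The remaining task is to bound $\rho(V \setminus \tau^*) = \rho(\{a\} \cup V')$. If $|V'| \leq m-2$, the trivial bound $|V \setminus \tau^*| \leq m-1 \leq k-1$ already suffices. Otherwise, $|V'| \geq m-1$: applying the hypothesis to any $(m-1)$-subset $B \subset V'$ with $\rho(A_0 \cup B) \geq k$ would force $B \cap U_0 \neq \emptyset$, contradicting $B \subset V'$. Hence $\rho(A_0 \cup B) \leq k-1$ for every such $B$. A matroid exchange argument---using $\rho(A_0) \geq k-m+1$ to ensure that any independent set of size $k$ inside $A_0 \cup V'$ uses at most $m-1$ elements of $V'$, which may then be extended to a full $(m-1)$-subset of $V'$---upgrades this to $\rho(A_0 \cup V') \leq k-1$, giving $\rho(V \setminus \tau^*) \leq \rho(A_0 \cup V') \leq k-1$.

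The subtle point to get right is the rank threshold $k-m+1$ used in the case split: it is exactly the value that makes $k' = k-m+1$ land in the admissible range $[1,d+1]$ for Theorem \ref{thm:very_colorful_helly_d_leray_m==1} in the first branch, while also guaranteeing that in the second branch $A_0$ can be extended by only $m-1$ additional elements to reach rank at least $k$, which is what activates the hypothesis. The main technical hurdle will be the final matroid exchange step, converting the $(m-1)$-subset rank bound on $V'$ into a full rank bound on $A_0 \cup V'$.
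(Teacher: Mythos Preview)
Your proof is correct and takes a genuinely different route from the paper's. The paper argues by induction on $m$: if the hypothesis of the theorem happens to hold for the pair $(m-1,k-1)$, one applies the induction hypothesis; otherwise one exhibits a witness $U$ of size $d+m-1$ with $\rho(U)\geq k-1$ violating that weaker hypothesis, and splits into the sub-cases $\rho(U)=k-1$ and $\rho(U)\geq k$, in each case producing $\tau$ directly via Lemma~\ref{lem:U_is_a_simplex}. Your argument instead collapses the whole induction into a single shot: you reduce directly to the $m=1$ case (Theorem~\ref{thm:very_colorful_helly_d_leray_m==1}) with the shifted parameter $k'=k-m+1$, and in the complementary branch you work with a bare $(d+1)$-set $A_0$ rather than a full $(d+m-1)$-set. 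This is more economical and makes the role of the threshold $k-m+1$ transparent, whereas the paper's induction uncovers it only implicitly through the chain $(m,k)\to(m-1,k-1)\to\cdots\to(1,k-m+1)$. The paper's approach, on the other hand, avoids the final matroid exchange step you need to pass from the bound on $(m-1)$-subsets of $V'$ to all of $A_0\cup V'$.

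One small correction in that final step: the phrase ``any independent set of size $k$ inside $A_0\cup V'$ uses at most $m-1$ elements of $V'$'' is not literally true (an independent $k$-set could meet $A_0$ in far fewer than $k-m+1$ points). What you need, and what the exchange argument actually gives, is that \emph{some} such independent set does: start from a maximal independent subset $J_0\subset A_0$ (of size $\rho(A_0)\geq k-m+1$) and extend it inside $A_0\cup V'$ to an independent set of size $k$; the added elements lie in $V'$ since $J_0$ already spans $A_0$, and there are at most $k-(k-m+1)=m-1$ of them. With this adjustment the argument goes through as written.
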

\addtocounter{theorem}{-1}
\endgroup
\begin{proof}
We argue by induction on $m$. For $m=1$ the claim holds by Theorem~\ref{thm:very_colorful_helly_d_leray_m==1}. Let $m>1$ and assume that the claim holds for $m-1$. 

If for all $U=\{u_1,\ldots,u_{d+1},v_1,\ldots,v_{m-2}\}\subset V$ with $\rho(U)\geq k-1$, either $\{u_1,\ldots,u_{d+1}\}\in X$ or there is some $1\leq j\leq m-2$ such that $\sigma\cup\{v_j\}\in X$ for all $\sigma\subsetneq \{u_1,\ldots,u_{d+1}\}$, then by the induction hypothesis there is some $\tau\in X$ such that $\rho(V\setminus \tau)\leq k-2\leq k-1$, as wanted.

Otherwise, there exists $U=\{u_1,\ldots,u_{d+1},v_1,\ldots,v_{m-2}\}\subset V$ with $\rho(U)\geq k-1$, such that $A=\{u_1,\ldots,u_{d+1}\}\notin X$ and for every $1\leq j\leq m-2$ there is some $\sigma_j\subsetneq A$ such that $\sigma_j\cup\{v_j\}\notin X$.
We divide into two cases: First, assume that $\rho(U)=k-1$. Then, for any $v\in V\setminus \text{span}_M(U)$, we have $\rho(\{u_1,\ldots,u_{d+1},v_1,\ldots,v_{m-2},v\})\geq k$, and therefore by the assumption of the theorem we must have $\sigma\cup\{v\}\in X$ for all $\sigma\subsetneq A$. Note that, since $\rho(V)=r \geq k >\rho(U)$, we must have $V\setminus \text{span}_M(U)\neq \emptyset$.
Hence, by Lemma~\ref{lem:U_is_a_simplex}, we have $\tau=V\setminus \text{span}_M(U)\in X$. Note that $\rho(V\setminus \tau)=\rho(U)= k-1$, as desired.

Next, assume that $\rho(U)\geq k$. Then, for any $v\in V\setminus U$, we have $\rho(\{u_1,\ldots,u_{d+1},v_1,\ldots,v_{m-2},v\})\geq\rho(U)\geq k$, and therefore by the assumption of the theorem, we must have $\sigma\cup\{v\}\in X$ for every $\sigma\subsetneq A$. Note that, since $|V|\geq d+m \geq |U|+1$, we must have $V\setminus U\neq \emptyset$.
So, by Lemma~\ref{lem:U_is_a_simplex}, we have $\tau=(V\setminus U)\cup \{u_1,\ldots,u_d\}\in X$. Since $\rho(V\setminus \tau)=\rho(\{u_{d+1},v_1,\ldots,v_{m-2})\leq m-1\leq k-1$, we are done.
\end{proof}

\section{Geometric applications}\label{sec:apps}

\subsection{Colorful Helly and Carath\'eodory theorems}

In this section we present a proof of Theorem \ref{thm:main_geometric}, and show how to derive from it an analogous extension of B\'ar\'any's  Colorful Carath\'eodory Theorem \cite{barany1982generalization}. The arguments in this section are standard and well known (see e.g. \cite{kalai2005topological,barany1982generalization}), but we include detailed proofs for completeness.

\begingroup
\def\thetheorem{\ref{thm:main_geometric}}
\begin{theorem}
    Let $d\geq 1$, $r\geq d+1$, $1\leq m\leq r$ and $m\leq k\leq \min\{m+d,r\}$.
    
    Let $\mathcal{C}$ be a finite family of convex sets in $\Rea^d$, colored with $r$ different colors, of size $|\mathcal{C}|\geq \max\{m+d,r\}$.
    Assume that for every family $\{A_1,\ldots,A_d,B_1,\ldots,B_m\}\subset \mathcal{C}$,  colored with at least $k$ different colors, at least one of the subfamilies of the form $\{A_1,\ldots,A_d,B_i\}$, for $i\in[m]$, is intersecting.
    Then, there are $r-k+1$ color classes whose union is intersecting.
\end{theorem}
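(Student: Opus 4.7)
The plan is to deduce Theorem \ref{thm:main_geometric} directly from Theorem \ref{thm:very_colorful_helly_d_coll} by the standard nerve-plus-partition-matroid construction.

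First I would let $X = \nerve(\mathcal{C})$ be the nerve of $\mathcal{C}$, viewed as a simplicial complex on vertex set $V = \mathcal{C}$. By Wegner's theorem, the nerve of a finite family of convex sets in $\Rea^d$ is $d$-representable, and in particular $d$-collapsible, so the hypothesis on $X$ in Theorem \ref{thm:very_colorful_helly_d_coll} is satisfied. Next, I would equip $V$ with the \emph{partition matroid} $M$ associated to the $r$-coloring of $\mathcal{C}$: a set $S \subset V$ is independent in $M$ iff it contains at most one set from each color class. Then $M$ has rank $r$, and its rank function $\rho$ sends each $S \subset V$ to the number of distinct colors represented in $S$. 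The size condition $|\mathcal{C}| \geq \max\{m+d, r\}$ is exactly the vertex-set size needed to apply Theorem \ref{thm:very_colorful_helly_d_coll}.

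The main step is checking that the hypothesis of Theorem \ref{thm:very_colorful_helly_d_coll} holds, but this is essentially a translation. Given $U = \{u_1,\ldots,u_d,v_1,\ldots,v_m\} \subset V$ with $\rho(U) \geq k$, the set $U$ represents at least $k$ distinct colors. The assumption of Theorem \ref{thm:main_geometric} then guarantees that at least one of the subfamilies $\{u_1,\ldots,u_d,v_i\}$ with $i\in[m]$ is intersecting, which by definition of the nerve means $\{u_1,\ldots,u_d,v_i\} \in X$, as required.

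Applying Theorem \ref{thm:very_colorful_helly_d_coll}, I obtain $\tau \in X$ with $\rho(V\setminus\tau) \leq k-1$. Translating back through the partition matroid, this says the sets in $V\setminus\tau$ represent at most $k-1$ colors, so at least $r-k+1$ color classes are contained entirely in $\tau$. Finally, since $\tau\in X = \nerve(\mathcal{C})$, all sets in $\tau$ share a common point $p \in \Rea^d$; in particular $p$ lies in every set belonging to those $r-k+1$ color classes, so their union is intersecting. No serious obstacle is expected, since once $M$ is chosen as the partition matroid every hypothesis of Theorem \ref{thm:very_colorful_helly_d_coll} matches a hypothesis of Theorem \ref{thm:main_geometric} verbatim; the only minor care is verifying $\rho(V)=r$, which uses that every color class is nonempty (implicit in the phrase ``colored with $r$ different colors'').
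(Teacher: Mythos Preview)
Your proposal is correct and follows essentially the same approach as the paper: take $X$ to be the nerve of $\mathcal{C}$ (which is $d$-collapsible by Wegner), take $M$ to be the partition matroid of the coloring so that $\rho$ counts colors, verify the hypothesis of Theorem~\ref{thm:very_colorful_helly_d_coll}, and then read off from $\rho(V\setminus\tau)\le k-1$ that $\tau$ contains at least $r-k+1$ full color classes. The paper's proof is identical in structure and detail.
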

\addtocounter{theorem}{-1}
\endgroup
\begin{proof}
Let $X$ be the nerve of the family $\mathcal{C}$. By Wegner's Theorem \cite{Weg75}, $X$ is $d$-collapsible.
Let $\mathcal{C}_1,\ldots,\mathcal{C}_r$ be the color classes of $\mathcal{C}$. Let $M$ be the partition matroid on vertex set $\mathcal{C}$,  corresponding to the partition $\mathcal{C}=\mathcal{C}_1\cup\cdots\cup\mathcal{C}_r$. That is, the independent sets of $M$ are exactly the colorful subfamilies of $\mathcal{C}$, and the rank function $\rho$ satisfies
\[
    \rho(\mathcal{C}')= |\{i\in[r]:\, \mathcal{C}'\cap \mathcal{C}_i\neq \emptyset\}| 
\]
for all $\mathcal{C}'\subset\mathcal{C}$. That is, $\rho(\mathcal{C}')$ is the number of different colors appearing in $\mathcal{C}'$.

Let $U=\{A_1,\ldots,A_d,B_1,\ldots,B_m\}\subset V$ be a subfamily colored with at least $k$ different colors (or equivalently, satisfying $\rho(U)\geq k$). Then, there is some $i\in[m]$ such that $\{A_1,\ldots,A_d,B_i\}$ is intersecting. In other words, $\{A_1,\ldots,A_d,B_i\}\in X$. Therefore, by Theorem \ref{thm:very_colorful_helly_d_coll}, there is some $\tau\in X$ such that $\rho(V\setminus \tau)\leq k-1$. That is, $\tau\subset\mathcal{C}$ is an intersecting subfamily, whose complement intersects at most $k-1$ color classes. This means that $\tau$ contains at least $r-k+1$ entire color classes, as wanted.
\end{proof}

\begin{theorem}\label{thm:caratheodory}
  Let $d\geq 1$, $r\geq d+1$, $1\leq m\leq r$ and $m\leq k\leq \min\{m+d,r\}$.
  Let $P$ be a finite family of points in $\Rea^d$ colored with $r$ colors, of size $|P|\geq \max\{m+d,r\}$.
  
  Assume that the convex hull of the union of every $r-k+1$ color classes contains the origin. Then, there is some $\{u_1,\ldots,u_d,v_1,\ldots,v_m\}\subset P$, colored with at least $k$ different colors, such that $0\in \conv(\{u_1,\ldots,u_d,v_i\})$ for all $i\in[m]$.
\end{theorem}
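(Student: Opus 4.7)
The plan is to deduce this statement from Theorem~\ref{thm:main_geometric} via the classical polarity between points and half-spaces through the origin (the same trick B\'ar\'any uses to derive his original Colorful Carath\'eodory Theorem from the Colorful Helly Theorem). We may assume $0 \notin P$, the case $0 \in P$ being trivial or reducible by a mild perturbation. For each $p \in P$, define the open half-space $H_p = \{x \in \Rea^d : \langle x, p \rangle > 0\}$, which is a convex set in $\Rea^d$, and assign $H_p$ the color of $p$. This yields a family $\mathcal{C} = \{H_p : p \in P\}$ of $|P|$ convex sets colored with $r$ colors, to which Theorem~\ref{thm:main_geometric} will be applied.

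The key observation is the standard polarity dictionary: for every non-empty $S \subset P$,
\[
\bigcap_{p \in S} H_p \neq \emptyset \iff 0 \notin \conv(S).
\]
Indeed, by strict separation of $0$ from the compact convex set $\conv(S)$, the right-hand side holds if and only if there exists $v \in \Rea^d$ with $\langle v, p \rangle > 0$ for every $p \in S$, which is exactly the statement $v \in \bigcap_{p \in S} H_p$.

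I would then argue by contradiction. Suppose the conclusion of the theorem fails. Then for every subset $\{u_1,\ldots,u_d,v_1,\ldots,v_m\} \subset P$ colored with at least $k$ different colors, there exists some $i \in [m]$ with $0 \notin \conv(\{u_1,\ldots,u_d,v_i\})$, which by the polarity dictionary means that the corresponding subfamily $\{H_{u_1},\ldots,H_{u_d},H_{v_i}\}$ is intersecting. This is precisely the hypothesis of Theorem~\ref{thm:main_geometric} applied to $\mathcal{C}$, so we conclude that there exist $r-k+1$ color classes of $\mathcal{C}$ whose union is intersecting. Applying the polarity dictionary in the reverse direction yields an index set $\mathcal{I} \subset [r]$ of size $r-k+1$ with $0 \notin \conv\bigl(\bigcup_{i \in \mathcal{I}} P_i\bigr)$, contradicting the hypothesis that the convex hull of every union of $r-k+1$ color classes of $P$ contains the origin. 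The main obstacle here is purely bookkeeping of the polarity correspondence; no new combinatorial or topological input beyond Theorem~\ref{thm:main_geometric} is required.
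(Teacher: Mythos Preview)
Your proof is correct and follows essentially the same approach as the paper: both pass to the family of open half-spaces $H_p=\{x:\langle x,p\rangle>0\}$, use the standard equivalence $0\in\conv(S)\iff\bigcap_{p\in S}H_p=\emptyset$, and argue by contradiction via Theorem~\ref{thm:main_geometric}. The only cosmetic difference is that the paper handles the case $0\in P$ uniformly by observing that $H_0=\emptyset$ is still convex and the polarity equivalence still holds, whereas you set this case aside as trivial.
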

\begin{proof}
For each $p\in P$, let $H_p\subset \Rea^d$ be defined as
\[
    H_p=\{ x\in \Rea^d :\, x\cdot p>0\}.
\]
Note that, for $p\neq 0$, $H_p$ is an open half-space, and for $p=0$, $H_p=\emptyset$. In particular, $H_p$ is convex for all $p\in P$.

It is a well known fact that for any $P'\subset P$, $0\in\conv(P')$ if and only if $\cap_{p\in P'} H_p=\emptyset$. Indeed, assume first that  $\cap_{p\in P'} H_p\neq \emptyset$. That is, there is some $x\in \Rea^d$ such that $x\cdot p>0$ for all $p\in P'$. Let $\{t_p\}_{p\in P'}$ be non-negative real numbers such that $\sum_{p\in P'} t_p=1$. Then, there is some $p_0\in P'$ such that $t_{p_0}>0$. Thus, we obtain
\[
    \left(\sum_{p\in P'} t_p p\right) \cdot x = \sum_{p\in P'} t_p (p\cdot x)\geq t_{p_0}(p_0\cdot x) >0 .
\]
Therefore, $0\notin\conv(P')$. In the other direction, assume that  $0\notin\conv(P')$. Then, there is some hyperplane separating $0$ from $ \conv(P')$. In other words, there exists $x\in \Rea^d$ such that $x\cdot y >0$ for all $y\in \conv(P')$. In particular, $x\in H_p$ for all $p\in P'$.
Therefore, $\cap_{p\in P'} H_p\neq \emptyset$.

We color the family $\mathcal{H}=\{H_p\}_{p\in P}$ in $r$ colors, each set $H_p$ colored with the color of $p$. 
Assume for contradiction that for all $\{u_1,\ldots,u_d,v_1,\ldots,v_m\}\subset P$, colored with at least $k$ colors, $0\notin \conv(\{u_1,\ldots,u_d,v_i\})$ for some $i\in[m]$. Equivalently, for all $\{H_{u_1},\ldots,H_{u_d},H_{v_1},\ldots,H_{v_m}\}\subset\mathcal{H}$, colored with at least $k$ colors, $\{ H_{u_1},\ldots,H_{u_d},H_{v_i}\}$ is intersecting for some $i\in[m]$.

By Theorem \ref{thm:main_geometric}, there are $r-k+1$ color classes in $\mathcal{H}$ whose union is intersecting. Equivalently, there are $r-k+1$ color classes in $P$ that do not contain the origin in the convex hull of their union. But this is a contradiction to the assumption of the theorem.
\end{proof}

For the next section, we will need the special case $r=k=d+m$ of Theorem \ref{thm:caratheodory}:

\begin{theorem}\label{thm:caratheodory_special}
  Let $d\geq 1$ and $m\geq 1$.
  Let $P$ be a finite family of points in $\Rea^d$ colored with $d+m$ colors. Assume that the convex hull of every color class contains the origin. Then, there is some colorful set $\{u_1,\ldots,u_d,v_1,\ldots,v_m\}\subset P$ such that $0\in \conv(\{u_1,\ldots,u_d,v_i\})$ for all $i\in[m]$.
\end{theorem}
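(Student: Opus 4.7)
The plan is to derive Theorem \ref{thm:caratheodory_special} as an immediate special case of Theorem \ref{thm:caratheodory}, by setting $r = k = d+m$. In this regime the quantity $r-k+1$ specializes to $1$, so the hypothesis ``the convex hull of the union of every $r-k+1$ color classes contains the origin'' becomes exactly ``each color class has the origin in its convex hull,'' matching the hypothesis of the special case. Similarly, the colorfulness in Theorem \ref{thm:caratheodory_special} is equivalent to ``colored with at least $k = d+m$ colors,'' since there are only $d+m$ color classes available.

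Before applying Theorem \ref{thm:caratheodory}, I will verify the numerical side-conditions with $r = k = d+m$: the constraint $r \geq d+1$ becomes $d+m \geq d+1$, i.e.\ $m \geq 1$; the chain $1 \leq m \leq r$ is $1 \leq m \leq d+m$; and $m \leq k \leq \min\{m+d, r\}$ is $m \leq d+m \leq d+m$. All of these hold by hypothesis. The only remaining condition is the size condition $|P| \geq \max\{m+d, r\} = d+m$, but this comes for free: since each of the $d+m$ color classes must have a non-empty convex hull (to contain the origin), each color class is non-empty, hence $|P| \geq d+m$.

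With all hypotheses checked, Theorem \ref{thm:caratheodory} produces a set $\{u_1,\ldots,u_d,v_1,\ldots,v_m\} \subset P$ using at least $d+m$ different colors, i.e.\ colorful, such that $0 \in \conv(\{u_1,\ldots,u_d,v_i\})$ for every $i \in [m]$, which is exactly the desired conclusion. Since this is a direct specialization with no new combinatorial content, there is no genuine obstacle here; the only subtle point worth explicitly noting in the write-up is that the a priori missing cardinality hypothesis on $|P|$ is forced by the convex-hull assumption on the color classes.
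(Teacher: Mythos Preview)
Your proposal is correct and matches the paper's approach exactly: the paper simply introduces Theorem~\ref{thm:caratheodory_special} as ``the special case $r=k=d+m$ of Theorem~\ref{thm:caratheodory}'' without further proof. Your explicit verification of the numerical constraints, and especially the observation that $|P|\geq d+m$ is forced by the non-emptiness of the color classes, fills in details the paper leaves implicit.
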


\subsection{Tverberg centers for large point sets}

In this section we prove Theorem \ref{thm:tverberg_sunflower}. Theorem \ref{thm:tverberg_sunflower} follows from Theorem \ref{thm:caratheodory_special} by a straightforward adaptation of Sarkaria's proof of Tverberg's Theorem via the Colorful Carath{\'e}odory Theorem \cite{sarkaria1992tverberg}. For completeness, we write down the argument, closely following the exposition in \cite{arocha2009very} (which in turn is based on B\'ar\'any and Onn's variant of Sarkaria's proof given in \cite{barany1997colourful}).

First, we will need the following Lemma:
\begin{lemma}[Sarkaria (see e.g. {\cite[Lemma 2]{arocha2009very}})]
\label{lemma:tensor}
    Let $P_1,\ldots,P_r$ be finite sets of points in $\Rea^d$.  Let $v_1,\ldots,v_r\in \Rea^{r-1}$ be the vertices of the standard $(r-1)$-dimensional simplex. Define
\[
    \bar{P}= \left\{ \begin{pmatrix} x\\1 \end{pmatrix}\otimes v_j :\, j\in[r],\, x\in P_j\right\}\subset \Rea^{(r-1)(d+1)},
\]
where $\otimes$ denotes the Kronecker product. Then $\cap_{j=1}^r \conv(P_j)=\emptyset$ if and only if $0\notin \conv(\bar{P})$.
\end{lemma}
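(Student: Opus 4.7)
The plan is to exploit two key properties of the vertices $v_1,\ldots,v_r$ of the standard $(r-1)$-simplex in $\mathbb{R}^{r-1}$, centered at the origin: (i) $\sum_{j=1}^r v_j = 0$, and (ii) a dependence $\sum_j \lambda_j v_j = 0$ forces $\lambda_1=\cdots=\lambda_r$ (since any $r-1$ of the $v_j$'s are linearly independent, the null space of the $(r-1)\times r$ matrix with columns $v_j$ is one-dimensional, and by (i) it is spanned by the all-ones vector). Both directions then follow from elementary bookkeeping with the Kronecker product, viewing each term $w\otimes v_j$ as the $v_j$-weighted record of a vector $w\in\mathbb{R}^{d+1}$.

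For the easy direction ($\cap_j \conv(P_j)\neq\emptyset \Rightarrow 0\in\conv(\bar P)$), I would pick a common point $y\in\bigcap_j \conv(P_j)$ and write $y=\sum_{x\in P_j}\lambda_{j,x}x$ with non-negative coefficients summing to $1$ for each $j$. Then the convex combination
\[
\sum_{j=1}^r \sum_{x\in P_j} \tfrac{1}{r}\lambda_{j,x}\begin{pmatrix} x\\1\end{pmatrix}\otimes v_j \;=\; \tfrac{1}{r}\begin{pmatrix} y\\1\end{pmatrix}\otimes \sum_{j=1}^r v_j \;=\; 0
\]
exhibits $0$ as a convex combination of points of $\bar P$, using property (i).

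For the harder direction, suppose $0=\sum_{j,x}\mu_{j,x}\bigl(\begin{smallmatrix} x\\1\end{smallmatrix}\bigr)\otimes v_j$ with $\mu_{j,x}\geq 0$ and total sum $1$. Set $w_j=\sum_{x\in P_j}\mu_{j,x}\bigl(\begin{smallmatrix} x\\1\end{smallmatrix}\bigr)\in\mathbb{R}^{d+1}$, so that $\sum_j w_j\otimes v_j=0$. Reading this coordinatewise in $\mathbb{R}^{d+1}$ gives $\sum_j (w_j)_i \, v_j=0$ for every coordinate $i$, and property (ii) forces $(w_1)_i=\cdots=(w_r)_i$ for each $i$. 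Hence $w_1=\cdots=w_r$. The last coordinate of $w_j$ equals $\alpha_j:=\sum_{x\in P_j}\mu_{j,x}$, so all $\alpha_j$ coincide; combined with $\sum_j \alpha_j=1$ this gives $\alpha_j=1/r>0$. The first $d$ coordinates of $w_j$ then equal $\alpha_j\cdot y_j$ where $y_j=\sum_{x\in P_j}(\mu_{j,x}/\alpha_j)x\in\conv(P_j)$, and since all the $w_j$'s agree and all $\alpha_j$'s are equal, $y_1=\cdots=y_r$ is a common point of every $\conv(P_j)$.

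The main (mild) obstacle is just being careful with the Kronecker-product coordinate bookkeeping and verifying that the $\alpha_j$'s are strictly positive so that the rescaling $y_j=\sum_x (\mu_{j,x}/\alpha_j)x$ is a genuine convex combination; the logical skeleton is short and relies only on properties (i) and (ii) of the $v_j$'s.
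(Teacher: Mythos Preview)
The paper does not actually supply a proof of this lemma; it is stated with a citation to Arocha et al.\ and used as a black box. Your argument is correct and is precisely the standard B\'ar\'any--Onn rendering of Sarkaria's trick: the two properties you isolate---$\sum_j v_j=0$ and the one-dimensionality of the null space of $(v_1,\ldots,v_r)$---are exactly what make the tensor encoding work, and your bookkeeping in both directions is sound (in particular, the step showing $\alpha_j=1/r>0$ is the point that legitimizes the rescaled convex combinations $y_j$). One small remark: the phrase ``standard $(r-1)$-simplex'' is being interpreted here as a regular simplex centered at the origin in $\mathbb{R}^{r-1}$, which is the intended reading in this context; it might be worth stating explicitly that any choice of $v_1,\ldots,v_r\in\mathbb{R}^{r-1}$ affinely spanning $\mathbb{R}^{r-1}$ with barycenter at the origin would do.
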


\begingroup
\def\thetheorem{\ref{thm:tverberg_sunflower}}
\begin{theorem}
Let $d\geq 1$, $r\geq 2$, and let $n=(r-1)(d+1)$.
Let $A\subset \Rea^d$ be a finite set of points of size larger than $n$. Then, there exists a partition $A_1,\ldots,A_r$ of $A$ that has a Tverberg center of size $n$.
\end{theorem}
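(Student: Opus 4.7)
The plan is to use Sarkaria's tensor product reduction \cite{sarkaria1992tverberg}, as in the paper's derivation strategy, but now feeding it into the strengthened colorful Carath\'eodory statement of Theorem \ref{thm:caratheodory_special} rather than the classical one. Let $n=(r-1)(d+1)$ and $s=|A|-n\geq 1$. Following Lemma \ref{lemma:tensor}, I choose vertices $v_1,\ldots,v_r\in\Rea^{r-1}$ of an $(r-1)$-simplex centered at the origin (so that $v_1+\cdots+v_r=0$), and for each point $a\in A$ I form the color class
\[
    \mathcal{P}_a=\left\{\begin{pmatrix}a\\1\end{pmatrix}\otimes v_j :\, j\in[r]\right\}\subset \Rea^{n}.
\]
Since $\sum_j v_j=0$, the centroid of $\mathcal{P}_a$ is the origin, so $0\in\conv(\mathcal{P}_a)$ for every $a\in A$.

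Next, I would apply Theorem \ref{thm:caratheodory_special} in ambient dimension $n$ with the parameter $m$ set to $s$, using the family $\bigcup_{a\in A}\mathcal{P}_a$ and treating each $\mathcal{P}_a$ as one of the $|A|=n+s$ color classes. This produces a colorful selection $\{u_1,\ldots,u_n,w_1,\ldots,w_s\}$. By colorfulness, the $u_i$'s come from $n$ distinct points, which form a subset $B\subset A$ of size exactly $n$, and the $w_j$'s come from the remaining $s$ points, which we label $A\setminus B=\{p_1,\ldots,p_s\}$. Writing $u_i=\begin{pmatrix}b_i\\1\end{pmatrix}\otimes v_{\sigma(b_i)}$ and $w_j=\begin{pmatrix}p_j\\1\end{pmatrix}\otimes v_{\tau(p_j)}$ defines functions $\sigma:B\to[r]$ and $\tau:A\setminus B\to[r]$, which together yield the candidate partition $A_k=\sigma^{-1}(k)\cup\tau^{-1}(k)$ of $A$ for $k\in[r]$.

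Finally, the conclusion of Theorem \ref{thm:caratheodory_special} gives $0\in\conv(\{u_1,\ldots,u_n,w_j\})$ for every $j\in[s]$. For each fixed $j$, this set is precisely the tensor set $\bar P$ associated (as in Lemma \ref{lemma:tensor}) to the partition of $B\cup\{p_j\}$ induced by $A_1,\ldots,A_r$, since $b_i$ lies in part $\sigma(b_i)$ and $p_j$ lies in part $\tau(p_j)$. Thus Lemma \ref{lemma:tensor} immediately converts each of these colorful selections into a Tverberg partition of $B\cup\{p_j\}$, i.e.
\[
    \bigcap_{k=1}^r \conv\bigl((B\cup\{p_j\})\cap A_k\bigr)\neq\emptyset,
\]
which is exactly the definition of $B$ being a Tverberg center for the partition $A_1,\ldots,A_r$.

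The conceptual heart of the argument --- and the step that a priori looks like it needs the most care --- is recognizing that the asymmetric ``$u$ fixed, $v$ variable'' structure of Theorem \ref{thm:caratheodory_special} mirrors exactly the definition of a Tverberg center (a fixed core $B$ together with a swappable extra point $p$), so Sarkaria's tensor reduction transforms our new colorful Carath\'eodory statement into the desired Tverberg-with-center statement without any further combinatorial input. Once this correspondence is in place, the remaining details are straightforward bookkeeping.
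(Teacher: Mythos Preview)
Your proof is correct and is essentially the same as the paper's: both run Sarkaria's tensor reduction with one color class per point of $A$, apply Theorem~\ref{thm:caratheodory_special} in $\Rea^n$ with $m=|A|-n$, and read off the partition and the center $B$ from the resulting colorful selection. The only cosmetic difference is that you verify $0\in\conv(\mathcal{P}_a)$ via the centroid identity $\sum_j v_j=0$, whereas the paper deduces it from the ``if'' direction of Lemma~\ref{lemma:tensor}; otherwise the arguments coincide.
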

\addtocounter{theorem}{-1}
\endgroup
\begin{proof}
Let $n=(r-1)(d+1)$ and  $m=|A|-n$.
Write $A=\{x_1,\ldots,x_{n+m}\}$. For $1\leq i\leq m+n$ and $1\leq j\leq r$, let $x_i^j$ be a copy of the point $x_i$. For $1\leq j\leq r$, let $P_j=\{x_1^j,\ldots,x_{n+m}^j\}$, and let $P=P_1\cup\cdots\cup P_r$. Let $v_1,\ldots,v_r\in \Rea^{r-1}$ be the vertices of the standard simplex, and define for each $1\leq i\leq n+m$ and $1\leq j\leq r$,
\[
    \bar{x}_i^j = \begin{pmatrix} x_i^j\\1 \end{pmatrix}\otimes v_j\in \Rea^n.
\]
Let $\bar{P}=\{\bar{x}_i^j :\, 1\leq i\leq n+m, 1\leq j\leq r\}$.
We color the points in $P$ with $|A|=n+m$ colors, where, for $1\leq i\leq n+m$ and $1\leq j\leq r$, $x_i^j$ is colored with color $i$. Similarly, we color $\bar{P}$ with $n+m$ colors, where,  for $1\leq i\leq n+m$ and $1\leq j\leq r$, $\bar{x}_i^j$ is colored with color $i$.

Note that for each color class $C_i=\{x_i^j :\, 1\leq j\leq r\}$,  $1\leq i\leq n+m$, we have $\cap_{j=1}^r \conv(C_i\cap P_j)=\cap_{j=1}^r\conv(\{x_i^j\})= \{x_i\}\neq \emptyset$. So, by Lemma \ref{lemma:tensor}, we have
\[
0 \in \conv(\{\bar{x}_i^j :\, 1\leq j\leq r\})
\]
for all $1\leq i\leq n+m$. That is, the convex hull of each of the color classes of $\bar{P}$ contains the origin. Therefore, by Theorem \ref{thm:caratheodory_special}, there is a colorful set $U=\{\bar{x}_{a_1}^{b_1},\ldots,\bar{x}_{a_{n+m}}^{b_{n+m}}\}\subset\bar{P}$ such that 
\begin{equation}\label{eq:conv1}
   0\in \conv(\{\bar{x}_{a_1}^{b_1},\ldots,\bar{x}_{a_n}^{b_n},\bar{x}_{a_{n+i}}^{b_{n+i}}\}) 
\end{equation} 
for all $1\leq i\leq m$. Since $U$ is colorful, the index set $\{a_1,\ldots,a_{n+m}\}$ is a permutation of $[n+m]$. Therefore, we can define a partition $A=A_1\cup\cdots\cup A_r$ by
\[
    A_j= \{x_{a_i} :  \, b_i=j\}
\]
for $1\leq j\leq r$. 
Let $B=\{ x_{a_1},\ldots,x_{a_n}\}\subset A$.
By Lemma \ref{lemma:tensor},  \eqref{eq:conv1} implies that for all $1\leq i\leq m$,
\[
\bigcap_{j=1}^r \conv\left(\{x_{a_1}^{b_1},\ldots,x_{a_n}^{b_n},x_{a_{n+i}}^{b_{n+i}}\}\cap P_j\right)
=
    \bigcap_{j=1}^r \conv\left((B\cup \{x_{a_{n+i}}\})\cap A_j\right)\neq \emptyset,
\]
as wanted.
\end{proof}

\begin{remark}
    For any point set in $\Rea^d$ of size larger than $(r-1)(d+1)$, Theorem \ref{thm:tverberg_sunflower} guarantees the existence of a partition into $r$ parts with Tverberg center of size at most $(r-1)(d+1)$. Note, however, that the same point set may have other Tverberg partitions without such a center (see Figure~\ref{fig:tverberg_example}).
\end{remark}
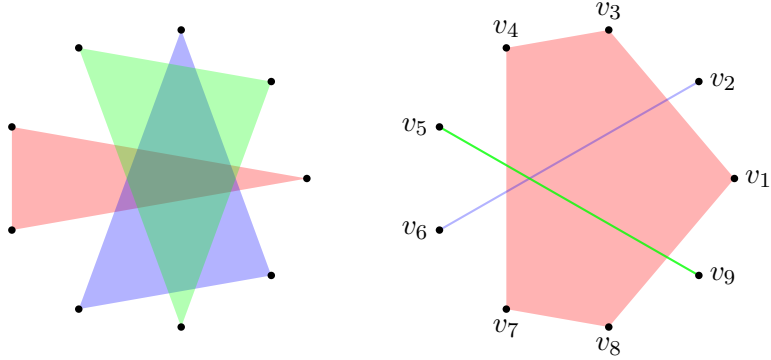
\begin{figure}
\begin{center}
\begin{tikzpicture}[thick, scale=1]
\begin{scope}[xshift=-80, yshift=0]
\fill[red, opacity=0.3] (0:2)--(160:2)--(200:2)--(0:2);
\fill[blue, opacity=0.3] (80:2)--(240:2)--(320:2)--(80:2);
\fill[green, opacity=0.3] (40:2)--(120:2)--(280:2)--(40:2);

\filldraw (0:2) circle(1pt);
\filldraw (40:2) circle(1pt);
\filldraw (80:2) circle(1pt);
\filldraw (120:2) circle(1pt);
\filldraw (160:2) circle(1pt);
\filldraw (200:2) circle(1pt);
\filldraw (240:2) circle(1pt);
\filldraw (280:2) circle(1pt);
\filldraw (320:2) circle(1pt);
\end{scope}

\begin{scope}[xshift=80, yshift=0]
\fill[red, opacity=0.3] (0:2)--(80:2)--(120:2)--(240:2)--(280:2);
\draw[blue, opacity=0.3] (40:2)--(200:2);
\draw[green, opacity=0.8] (160:2)--(320:2);

\node[right] at (0:2){$v_1$};
\node[right] at (40:2){$v_2$};
\node[above] at (80:2){$v_3$};
\node[above] at (120:2){$v_4$};
\node[left] at (160:2){$v_5$};
\node[left] at (200:2){$v_6$};
\node[below] at (240:2){$v_7$};
\node[below] at (280:2){$v_8$};
\node[right] at (320:2){$v_9$};

\filldraw (0:2) circle(1pt);
\filldraw (40:2) circle(1pt);
\filldraw (80:2) circle(1pt);
\filldraw (120:2) circle(1pt);
\filldraw (160:2) circle(1pt);
\filldraw (200:2) circle(1pt);
\filldraw (240:2) circle(1pt);
\filldraw (280:2) circle(1pt);
\filldraw (320:2) circle(1pt);
\end{scope}
\end{tikzpicture}
\caption{Two Tverberg partitions of $9$ points in the plane into $3$ parts.
The partition on the left does not have a Tverberg center of size $6$, while the partition on the right has one: $\{v_2,v_4,v_5,v_6,v_7,v_9\}$.}
\label{fig:tverberg_example}
\end{center}
\end{figure}    

Let $A\subset \Rea^d$ be a finite set of points. We say that a partition $A_1,\ldots,A_r$ of $A$ is a \emph{Tverberg partition with tolerance $t$} if 
\[
    \bigcap_{i=1}^r \conv(A_i\setminus C)\neq\emptyset,
\]
for any $C\subset A$ of size at most $t$. In other words, the partition has tolerance $t$ if it remains a Tverberg partition even after removing any $t$ points from $A$.
Larman showed in \cite{larman1972sets} that any $2d+3$ points in $\Rea^d$ have a Tverberg partition into two parts (that is, a Radon partition)  with tolerance $1$. This was later extended by Garc{\'\i}a-Col{\'\i}n (\cite{colin2007applying,garcia2015projective}) who showed that any $(t+1)(d+1)+1$ points in $\Rea^d$ have a Radon partition with tolerance $t$, and by Sober{\'o}n and Strausz in \cite{soberon2012generalisation}, who showed that any $(t+1)(r-1)(d+1)+1$ points in $\Rea^d$ have a Tverberg partition into $r$ parts with tolerance $t$. Note that this bound is not sharp in general: in \cite{garcia2017note} it was shown that any set of $rt+o(t)$ points in $\Rea^d$ has a Tverberg partition into $r$ parts with tolerance $t$, giving an improved bound for large values of $t$ (see \cite{soberon2018robust} for a further improvement of this bound). In \cite{mulzer2014algorithms}, an improved bound was given in the case $d=1$, and in the case $d=2$ for some values of $r,t$.

It was brought to our attention by Andreas Holmsen that one can recover Sober{\'o}n and Strausz's result as a consequence of Theorem \ref{thm:tverberg_sunflower}, in the following way: 

\begingroup
\def\thetheorem{\ref{thm:soberon_strausz}}
\begin{theorem}[Sober{\'o}n-Strausz {\cite[Theorem 1]{soberon2012generalisation}}]
Let $d\geq 1$, $r\geq 2$, and $t\geq 0$. Let $A\subset \Rea^d$ be finite set of points of size at least $(t+1)(r-1)(d+1)+1$. Then, $A$ has a Tverberg partition into $r$ parts with tolerance $t$.
\end{theorem}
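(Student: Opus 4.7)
The plan is to first reduce to the minimal-size case $|A| = N := (t+1)(r-1)(d+1) + 1$ via an easy induction on $|A|$, then handle that case using Theorem \ref{thm:tverberg_sunflower}. For the reduction, given $|A| > N$, I would pick any $a \in A$, apply the inductive hypothesis to $A \setminus \{a\}$ to obtain a partition $A_1, \ldots, A_r$ of $A\setminus\{a\}$ with tolerance $t$, and extend to $A$ by placing $a$ in an arbitrary part. A quick two-case verification (splitting on whether $a \in C$) confirms that tolerance $t$ is preserved: if $a \in C$, removing $C$ from the extension has the same effect as removing $C \setminus \{a\}$ (of size at most $t$) from the inductive partition; if $a \notin C$, the extended convex hulls contain the inductive ones, so any common point obtained inductively still works.

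For the base case $|A| = N$, set $n = (r-1)(d+1)$ and apply Theorem \ref{thm:tverberg_sunflower} with $r' = (t+1)(r-1) + 1$ parts. Since $|A| = (t+1)n + 1 > (r'-1)(d+1) = (t+1)n$, the theorem yields a partition $P_1, \ldots, P_{r'}$ of $A$ with Tverberg center $B$ of size $(t+1)n$. Because $|A\setminus B| = 1$, the sunflower conclusion reduces to saying that $P_1, \ldots, P_{r'}$ is a Tverberg partition of $A$ into $r'$ parts, with a common point $q$. I would then combine the $r'$ parts into $r$ groups so as to build in tolerance: since $r' = (r-1)(t+1) + 1$, the natural combining forms $r-1$ groups of $t+1$ parts each, plus one singleton group. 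For the $r-1$ ``big'' groups tolerance is immediate by counting: any $C$ with $|C| \leq t$ intersects at most $t$ of the $r'$ parts, so each big group retains at least one untouched $P_j$ whose convex hull contains $q$, hence $q \in \conv(A_i \setminus C)$ for each such group $A_i$.

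The hard part will be the singleton group: when $C$ intersects its unique part $P_{r'}$, the point $q$ can fall out of $\conv(P_{r'} \setminus C)$, and the common intersection point must be located elsewhere. To handle this I expect to either (a) make a careful choice of which $P_j$ plays the role of the singleton, using the flexibility in the partition produced by Theorem \ref{thm:tverberg_sunflower}, (b) iterate Theorem \ref{thm:tverberg_sunflower} on the singleton part to equip it with an internal sunflower structure absorbing up to $t$ removed points, or (c) follow the proof template of Theorem \ref{thm:tverberg_sunflower} itself: use Sarkaria's tensor trick to translate the task into a ``tolerant colorful Carath\'eodory'' statement in $\Rea^n$, then extract the tolerant representative from Theorem \ref{thm:caratheodory_special} via an averaging argument over the $tn+1$ sunflower petals which compensates for the removal of core points. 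I expect option (c) to be the cleanest path, since it directly parallels the derivation of Theorem \ref{thm:tverberg_sunflower} from Theorem \ref{thm:caratheodory_special} already carried out in the previous section, but getting the combinatorial bookkeeping right — ensuring that every subset of $t$ removed colors can be absorbed by a suitable convex combination of sunflower petals — will be the main technical challenge.
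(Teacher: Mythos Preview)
Your proposal has a genuine gap. The step where you apply Theorem~\ref{thm:tverberg_sunflower} with $r'=(t+1)(r-1)+1$ parts and $|A|=(r'-1)(d+1)+1$ leaves $|A\setminus B|=1$, so the sunflower conclusion degenerates to an ordinary Tverberg partition into $r'$ parts; you are extracting nothing from Theorem~\ref{thm:tverberg_sunflower} beyond Tverberg's theorem itself. You then correctly identify that grouping $r'$ parts into $r-1$ blocks of size $t+1$ plus one singleton fails at the singleton, but none of your three fixes is carried out, and each is problematic: for (a), every choice of singleton can be killed by a suitable $C$; for (b), the singleton part may contain only a handful of points (in fact it could have size $1$), so no iteration on it is possible; and (c) is a restatement of the goal rather than a method. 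As written, the argument does not close.

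The paper's proof takes a different route. It inducts on $t$, applies Theorem~\ref{thm:tverberg_sunflower} with the original $r$ (not $r'$) to get a partition $A_1,\ldots,A_r$ with Tverberg center $B$ of size $(r-1)(d+1)$, and then, since $|A\setminus B|\geq t(r-1)(d+1)+1$, applies the induction hypothesis to $A\setminus B$ to obtain a second partition $B_1,\ldots,B_r$ with tolerance $t-1$. An averaging argument over permutations $\pi\in S_r$ shows that for some $\pi$ the set $D=\bigcup_i A_i\cap B_{\pi(i)}$ has size at least $t+1$. The final partition is $\tilde A_i=(A_i\cap B)\cup B_{\pi(i)}$: if the removed set $C$ misses $B$, some point of $D$ survives and the sunflower property of $B$ gives the common point; if $C$ meets $B$, then $|C\cap(A\setminus B)|\leq t-1$ and the tolerance of the $B_j$'s takes over. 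The key idea you are missing is this interleaving of two partitions --- one coming from the sunflower structure on $B$, the other from induction on $A\setminus B$ --- aligned by the averaging step.
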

\addtocounter{theorem}{-1}
\endgroup
\begin{proof}
    We argue by induction on $t$. For $t=0$, the claim is just Tverberg's theorem.
    Assume $t\geq 1$. By Theorem \ref{thm:tverberg_sunflower}, there exists a partition $A_1,\ldots,A_r$ of $A$ and a set $B\subset A$ 
 of size $(r-1)(d+1)$ such that
    \[
        \bigcap_{i=1}^r \conv((B\cup\{p\})\cap A_i)\neq\emptyset
    \]
    for any $p\in A\setminus B$.
    Note that $|A\setminus B|\geq t(r-1)(d+1)+1$. Therefore, by the induction hypothesis, there is a Tverberg partition $B_1,\ldots,B_r$ of $A\setminus B$ with tolerance $t-1$.

    We denote by $S_r$ the set of permutations of $[r]$.
    Since $|A\setminus B|> t(r-1)(d+1)\geq tr$, we have
\begin{align*}
\frac{1}{r!}\sum_{\pi\in S_r} |\cup_{i=1}^r A_i\cap B_{\pi(i)}|
&=\frac{1}{r!}\sum_{\pi\in S_r} \sum_{i=1}^r |A_i\cap B_{\pi(i)}|
=\frac{1}{r!} \sum_{i=1}^r  \sum_{\pi\in S_r} |A_i\cap B_{\pi(i)}|
\\
&=\frac{1}{r!} \sum_{i=1}^r  (r-1)! |A_i\setminus B| 
= \frac{|A\setminus B|}{r}>t.
\end{align*}
Therefore, there is some $\pi\in S_r$ such that $|\cup_{i=1}^r A_i\cap B_{\pi(i)}|\geq t+1$. Let $D=\cup_{i=1}^r A_i\cap B_{\pi(i)}$. We define a new partition $\tilde{A}_1,\ldots,\tilde{A}_r$ of $A$ by
\[
    \tilde{A}_i = (A_i\cap B)\cup (B_{\pi(i)}\cap(A\setminus B))
\]
for all $1\leq i\leq r$. Note that $\tilde{A}_i\cap D=A_i\cap D$ for all $i$.

We are left to show that $\tilde{A}_1,\ldots,\tilde{A}_r$ is a Tverberg partition of $A$ with tolerance $t$. Let $C\subset A$ be a subset of size at most $t$. We divide into two cases. First, assume that $C$ is disjoint from $B$. Then, since $|C|\leq t$ and $|D|\geq t+1$, there is some $p\in D\setminus C$. We obtain
\begin{multline*}
\bigcap_{i=1}^r \conv(\tilde{A_i}\setminus C) \supset  \bigcap_{i=1}^r \conv((\tilde{A_i}\cap (B\cup D))\setminus C) 
\\
= \bigcap_{i=1}^r \conv((A_i\cap(B\cup D) )\setminus C) \supset \bigcap_{i=1}^r \conv(A_i\cap(B\cup \{p\}))\neq \emptyset,
\end{multline*}
as desired. Next, assume that $C\cap B\neq \emptyset$. Let $C'=C\cap (A\setminus B)$. Then $|C'|\leq t-1$, and therefore, since $B_1,\ldots,B_r$ is a Tverberg partition of $A\setminus B$ with tolerance $t-1$, we obtain
\begin{multline*}
\bigcap_{i=1}^r \conv(\tilde{A_i}\setminus C) \supset  
\bigcap_{i=1}^r\conv((\tilde{A_i}\cap (A\setminus B))\setminus C')
\\
=\bigcap_{i=1}^r\conv( B_{\pi(i)}\setminus C')
=\bigcap_{i=1}^r\conv( B_{i}\setminus C') \neq \emptyset.
\end{multline*}
Thus, $\tilde{A}_1,\ldots,\tilde{A}_r$ is a Tverberg partition of $A$ with tolerance $t$, as wanted.
\end{proof}

\section{Concluding remarks}\label{sec:conclusion}

It would be interesting to prove a stronger version of Theorem \ref{thm:very_colorful_helly_d_leray}, analogous to Theorem \ref{thm:very_colorful_helly_d_coll}:

\begin{conjecture}
Let $d\geq 1$, $r\geq d+1$, $1\leq m\leq r$ and $m\leq k\leq \min\{m+d,r\}$ be integers. Let $V$ be a finite set with $|V|\geq \max\{m+d,r\}$.
Let $X$ be a $d$-Leray simplicial complex on vertex set $V$, and let $M$ be a matroid of rank $r$ on vertex set $V$ with rank function $\rho$.

Assume that for every $U=\{u_1,\ldots,u_d,v_1,\ldots,v_m\}\subset V$ with $\rho(U)\geq k$, there is some $i\in[m]$ such that $\{u_1,\ldots,u_d,v_i\}\in X$. 
Then, there is some $\tau\in X$ such that  $\rho(V\setminus \tau)\leq k-1$.
\end{conjecture}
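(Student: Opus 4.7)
The plan is to prove the conjecture by induction on $m$, in the same spirit as the proof of Theorem \ref{thm:very_colorful_helly_d_leray}. The base case $m = 1$ is exactly Theorem \ref{thm:very_colorful_helly_d_leray_m==1}, since the conjecture's hypothesis and the hypothesis of Theorem \ref{thm:very_colorful_helly_d_leray} coincide when $m = 1$.

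For the inductive step, I would consider two cases. If the analogous conjecture hypothesis holds for the parameters $(m-1, k-1)$---that is, for every $(d+m-1)$-subset $U' \subset V$ with $\rho(U')\geq k-1$, some $(d+1)$-subset of $U'$ of the form $\{u_1,\ldots,u_d,v_i\}$ lies in $X$---then the induction hypothesis directly produces a simplex $\tau \in X$ with $\rho(V\setminus\tau)\leq k-2\leq k-1$. Otherwise, there is a bad configuration $U' = \{u_1,\ldots,u_d,v_1,\ldots,v_{m-1}\}$ with $\rho(U')\geq k-1$ and $\{u_1,\ldots,u_d,v_i\}\notin X$ for every $i\in[m-1]$. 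Writing $T = \{u_1,\ldots,u_d\}$ and applying the $m$-hypothesis to $U'\cup\{w\}$ with the partition $T\cup\{v_1,\ldots,v_{m-1},w\}$ yields $T\cup\{w\}\in X$ for every $w\in V$ with $\rho(U'\cup\{w\})\geq k$. From here, one would attempt to assemble these singleton extensions into a simplex $\tau\in X$ containing all of $V\setminus\mathrm{span}_M(U')$ (when $\rho(U') = k-1$) or all of $V\setminus U'$ (when $\rho(U')\geq k$), which would then satisfy $\rho(V\setminus\tau)\leq \rho(U')\leq k-1$ or $\rho(V\setminus\tau)\leq m-1\leq k-1$ respectively.

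The crux of the argument---and the reason the conjecture is genuinely more difficult than Theorem \ref{thm:very_colorful_helly_d_leray}---is this assembly step. In the proof of Theorem \ref{thm:very_colorful_helly_d_leray} the stronger hypothesis ensured that $w\cup\sigma\in X$ for \emph{every} $d$-subset $\sigma$ of a forbidden $(d+1)$-set $A$, which is precisely the input needed by Lemma \ref{lem:U_is_a_simplex} to produce the desired simplex. Under the conjecture's weaker hypothesis one only controls the single $d$-subset $T$, and the natural weakening of Lemma \ref{lem:U_is_a_simplex} in which one requires $v\cup\sigma\in X$ for only a single $d$-subset $\sigma$ of $A$ already fails for general $d$-Leray complexes: counterexamples arise already among chordal graphs in the $d = 1$ case. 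To bridge this gap, one would need either to mine the $m$-hypothesis for additional extension information---by applying it to configurations of the form $U'\cup\{w\}\setminus\{v_j\}$ with further substitutions, so that over several iterations one recovers enough simultaneous extension data to apply Lemma \ref{lem:U_is_a_simplex}---or to develop a new topological tool better adapted to the one-sided extension data the hypothesis provides. A plausible avenue for the latter is to apply the Kalai-Meshulam Lemma (Lemma \ref{lemma:kalai_meshulam}) to the link $\lk(X,T)$ together with the contracted matroid $M/T$, or to an appropriate tolerance matroid $M^{d+m-k}$ as in the proof of Theorem \ref{thm:very_colorful_helly_d_leray_m==1}; extracting a bad configuration of the desired shape (that is, $m$ simultaneous forbidden $(d+1)$-sets sharing a common $d$-subset) from the resulting $\tau\in M^{d+m-k}\setminus X$ is where I expect the principal obstacle to lie.
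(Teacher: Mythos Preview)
The statement you are addressing is listed in the paper as an open \emph{conjecture} (Section~\ref{sec:conclusion}); the paper does not supply a proof, so there is nothing to compare your attempt against. What you have written is not a proof either, and you say so yourself: it is a correct diagnosis of where the inductive scheme used for Theorem~\ref{thm:very_colorful_helly_d_leray} breaks down under the weaker hypothesis.

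Your analysis of the gap is accurate. In the ``bad configuration'' branch you correctly deduce that $T\cup\{w\}\in X$ for every admissible $w$, by applying the $m$-hypothesis to $U'\cup\{w\}$ with the specific labeling $(T\mid v_1,\dots,v_{m-1},w)$ and using that $T\cup\{v_i\}\notin X$ for $i\le m-1$. The desired simplex would then be $\tau=T\cup W$ for the relevant set $W$ of such $w$ (your phrasing ``containing all of $V\setminus U'$'' slightly understates this; you need $T\subset\tau$ as well, which is what yields $\rho(V\setminus\tau)\le m-1$ in the second sub-case). The obstacle is exactly as you say: knowing $T\cup\{w\}\in X$ for each $w\in W$ gives only that $W$ lies in the vertex set of $\lk(X,T)$, not that $W$ is a face of $\lk(X,T)$, and the one-sided weakening of Lemma~\ref{lem:U_is_a_simplex} is indeed false (your chordal-graph observation for $d=1$ is correct: take $X$ the clique complex of a star $K_{1,2}$ together with an isolated vertex).

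The two directions you sketch at the end---iterating the hypothesis over other labelings of $U'\cup\{w\}$ to accumulate more incidence data, or working with $M^{d+m-k}$ and Lemma~\ref{lemma:kalai_meshulam} directly---are the natural things to try, but you have not shown that either succeeds, and neither does the paper. In short: your write-up is an honest and essentially correct account of why the conjecture resists the method of Theorem~\ref{thm:very_colorful_helly_d_leray}; it is not, and does not claim to be, a proof.
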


Another interesting question, suggested to us by Florian Frick, is whether a version of Theorem \ref{thm:tverberg_sunflower} holds in the context of the topological Tverberg theorem. That is,

\begin{conjecture}
    Let $d\geq 1$, $r\geq 2$ a prime power, and $n=(r-1)(d+1)$. Let $N>n$, and let $\Delta_N$ be the $N$-dimensional simplex. Then, for every continuous map
    $
      f: \Delta_N\to \Rea^d,
    $
    there exist $r$ pairwise disjoint faces $F_1,\ldots,F_r$ of $\Delta_N$, and an
     $(n-1)$-dimensional face $F$ of $\Delta_N$, such that for each $n$-dimensional face $F'$ of $\Delta_N$ containing $F$,
     $\cap_{i=1}^r f(F_i\cap F')\neq\emptyset$.
\end{conjecture}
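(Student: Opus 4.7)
The plan is to adapt Sarkaria's tensor-product strategy used in the proof of Theorem~\ref{thm:tverberg_sunflower} to the topological setting and then to establish the required topological analog of Theorem~\ref{thm:caratheodory_special} via equivariant obstruction theory, exploiting the hypothesis that $r$ is a prime power.

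Concretely, for each vertex $p$ of $\Delta_N$ and each $j \in [r]$, set $\bar p^{\,j} = \binom{f(p)}{1} \otimes v_j \in \Rea^n$, where $v_1,\dots,v_r$ are the vertices of the standard $(r-1)$-simplex. This produces a configuration in $\Rea^n$ partitioned into $N+1$ color classes $C_p = \{\bar p^{\,j} : j \in [r]\}$, one per vertex of $\Delta_N$, each of which contains the origin in its convex hull. By Lemma~\ref{lemma:tensor}, the desired disjoint faces $F_1,\dots,F_r$ together with the $(n-1)$-face $F$ correspond to a transversal $\{\bar p^{\,j(p)}\}_{p}$ of the color classes and a distinguished $n$-subset $B \subset V(\Delta_N)$ satisfying $0 \in \conv\bigl(\{\bar p^{\,j(p)} : p \in B\} \cup \{\bar q^{\,j(q)}\}\bigr)$ for every $q \notin B$; the assignment $p \mapsto j(p)$ then recovers the partition and $B$ corresponds to the vertex set of $F$. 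Since this reduction only sees the vertex values of $f$, to genuinely use the continuity of $f$ one should lift it to a join-level construction, replacing $f$ by its $r$-fold join $f^{*r}$ and composing with the projection away from the thin diagonal in $(\Rea^d)^{*r}$.

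The main obstacle is proving the resulting \emph{topological sunflower colorful Carath\'eodory} statement. In the convex setting, Theorem~\ref{thm:caratheodory_special} is obtained from Theorem~\ref{thm:main_geometric} by the classical points-to-halfspaces duality, which relies essentially on linearity and does not extend to a general continuous map. A natural topological attack would use the configuration-space/test-map paradigm: construct a configuration space $K$ parameterizing partitions of $V(\Delta_N)$ into $r$ parts together with a distinguished $n$-subset $B$, equipped with a $(\ZZ/p)^k$-action for $r = p^k$, and an equivariant test map $K \to S$ to a suitable representation sphere encoding the sunflower condition. Showing that no such map can avoid the thin diagonal would likely require a connectivity estimate for $K$ together with a Volovikov-type lemma, in the spirit of the \"Ozaydin--Volovikov proof of the classical topological Tverberg theorem. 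Identifying the correct configuration space capturing the sunflower structure and carrying out the cohomological computation is the central difficulty, and presumably the reason that the statement remains a conjecture.
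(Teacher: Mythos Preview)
The statement you are addressing is not a theorem in the paper but a \emph{conjecture}: it appears in Section~\ref{sec:conclusion} as an open question suggested by Florian Frick, and the paper provides no proof. Consequently there is no ``paper's own proof'' against which to compare your proposal.

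Your proposal is, correspondingly, not a proof either, and you say as much in your final sentence. What you have written is a plausible outline of a strategy --- Sarkaria-type tensor reduction followed by an equivariant topological argument in the \"Ozaydin--Volovikov style --- together with an honest identification of the gap: the points-to-halfspaces duality underlying Theorem~\ref{thm:caratheodory_special} is a convex-geometric fact with no direct topological analog, and you do not supply one. The first paragraph already exhibits the difficulty concretely: defining $\bar p^{\,j}$ via $f(p)$ uses only the vertex values of $f$, which suffices for affine $f$ but loses all information about a general continuous map on higher-dimensional faces; your one-line suggestion to ``lift to a join-level construction'' is not developed into anything usable. The second paragraph then correctly names the missing ingredient --- a configuration space encoding the sunflower structure, a suitable connectivity bound, and a Volovikov-type obstruction --- without providing any of them.

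In short, your text is a reasonable research plan for attacking the conjecture, not a proof attempt, and it aligns with the paper's own assessment that the question is open.
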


\section*{Acknowledgements}
We thank Florian Frick, Andreas Holmsen and Roy Meshulam for their helpful comments and suggestions. We thank Andreas Holmsen for sharing with us his argument for deriving Theorem \ref{thm:soberon_strausz} from Theorem \ref{thm:tverberg_sunflower}.
 
\bibliographystyle{abbrv}
\bibliography{biblio}

\end{document}